\newcommand{\sump}{{{\sum}'}}
\newcommand{\abs}[1]{\left|#1\right|}
\newtheorem{theorem}{Theorem}[section]
\newtheorem{prop}{Proposition}
\newtheorem{corollary}{Corollary}[theorem]
\newtheorem{lemma}[theorem]{Lemma}
\newtheorem{remark}{Remark}
\newtheorem{definition}{Definition}[section]
\begin{document}
\title{The Fourier Cosine Method for Discrete Probability Distributions}

\author{
	Xiaoyu Shen\thanks{\href{https://fsquaredquant.nl}{FF Quant Advisory B.V.}, 3531 WR Utrecht, the Netherlands (\href{mailto:xiaoyu.shen@ffquant.nl}{xiaoyu.shen@ffquant.nl}).} 
	\and
	Fang Fang\thanks{Delft Institute of Applied Mathematics, Delft University of Technology, 2628 CD Delft, the Netherlands, and \href{https://fsquaredquant.nl}{FF Quant Advisory B.V.}, 3531 WR Utrecht, the Netherlands (\href{mailto:f.fang@tudelft.nl}{f.fang@tudelft.nl} and \href{mailto:fang.fang@ffquant.nl}{fang.fang@ffquant.nl}).}
	\and 
	Chengguang Liu\thanks{Delft Institute of Applied Mathematics, Delft University of Technology, 2628 CD Delft, the Netherlands, (\href{mailto:liucg92@gmail.com}{liucg92@gmail.com}).}
}

\maketitle

\abstract{We provide a rigorous convergence proof demonstrating that the well-known semi-analytical Fourier cosine (COS) formula for the inverse Fourier transform of continuous probability distributions can be extended to discrete probability distributions, with the help of spectral filters. We establish general convergence rates for these filters and further show that several classical spectral filters achieve convergence rates one order faster than previously recognized in the literature on the Gibbs phenomenon. Our numerical experiments corroborate the theoretical convergence results. Additionally, we illustrate the computational speed and accuracy of the discrete COS method with applications in computational statistics and quantitative finance. The theoretical and numerical results highlight the method's potential for solving problems involving discrete distributions, particularly when the characteristic function is known, allowing the discrete Fourier transform (DFT) to be bypassed.  }

\section{Introduction}
The Fourier cosine series (COS) method  (\cite{FangAndOosterleeCOS}) is an efficient semi-analytical solution to  the inverse Fourier transform problem. The key idea lies in the fact that the coefficients of a Fourier series expansion of a smooth probability density function can be analytically sampled from the characteristics function (ch.f.) up to an approximation error that can be made arbitrarily small. This allows for the avoidance of numerical methods such as the Fast Fourier Transform (FFT), which, while powerful, is not necessary when the Fourier transform is already available.

The COS method is known for its simplicity, speed, and accuracy. While the underlying principles and potential applications of COS extend beyond financial mathematics, its primary use till now has been in pricing financial derivatives and insurance products. To name only few among numerous applications, see \cite{FangAndOosterleeBarrier}, \cite{FangAndOosterleeHeston}, \cite{ZhangAndOosterleeAsian}, \cite{Jaber2018}, \cite{JaberAndDeCarvalho2024},  \cite{BondiPulidoScotti}, \cite{AndersenFusariTodorov}, \cite{Lietal}, and \cite{KANG202296}. Recently, COS has gained prominence as a computational tool for generating training data in deep learning studies in quantitative finance, owing to its exceptional speed and accuracy (see, e.g., \cite{GnoattoPicarelliReisinger} and \cite{risks7010016}). Beyond these, COS has also been utilized for solving backward stochastic differential equations \cite{COSBSDE}, coupled forward–backward stochastic differential equations \cite{HUIJSKENS2016593}, and stochastic control problems \cite{COSStochasticControl}. .

 To date, most studies on the COS method have assumed that the underlying probability distribution is continuous. The only attempt to extend the semi-analytical COS formula to discrete distributions is found in \cite{RuijterAndOosterlee2014}, though it offers only a heuristic argument. Another application of the COS method to discrete distributions is presented in \cite{SOUTOARIAS2025116177}, but it still relies on the Discrete Fourier Transform (DFT) for the inverse Fourier transform.  Our first contribution in this paper is to provide the first rigorous proof of convergence for the discrete cumulative distribution function (CDF) recovered by the same semi-analytical COS formula in \cite{RuijterAndOosterlee2014}, along with the corresponding error convergence rates. By establishing the theoretical convergence, we can now confidently bypass DFT and directly  invert the characteristics function (ch.f.) using the one-step semi-analytical COS formula to approximate discrete CDFs, in a similar way as we do for continuous distributions.

Additionally, we demonstrate how to recover the probability mass function (PMF) and compute moments using the semi-analytical COS formula, providing the associated error convergence rates. By doing so, we aim to extend the computational advantages of the COS method to a broader range of applications.
 
Although both \cite{RuijterAndOosterlee2014} and our work arrive at the same formula for the COS-recovered discrete CDF, the underlying approaches differ significantly. \cite{RuijterAndOosterlee2014} interprets this formula as a Riemann sum approximation of the L\'evy inversion formula  with a fixed integration step size. This interpretation would imply a first-order convergence due to the discretization error. In constrast, our derivation is  grounded in the convergence theory of spectral filters laid out in \cite{Vandeven1991}. We demonstrate that there is no numerical integration error in the COS-recovered CDF. Instead, the approximation error stems from the Gibbs phenomenon, with the convergence rate determined by the spectral filters employed to mitigate the Gibbs effect. The Gibbs phenomenon occurs because the sample space of a discrete distribution is inherently a discrete set.

Using Spectral filters is generally considered as a basic technique to reduce the impact of the Gibbs phenomenon. Nevertheless we choose spectral filters for three reasons. First, they offer the convenience as a mathematical tool for us to lay out the proof of the convergence that we need. Second, they preserve the simplicity and computational speed of the original COS method. Note that the use of spectral filters only requires modifying the COS coefficients analytically. It requires no extra computational cost, while other methods such as modifiers and adaptive filters (see e.g. \cite{GottbliebAndShu1991} and \cite{Tadmor2007}) are significantly more computationally expensive. Finally, the derived semi-analytical COS formula could serve as the first step for constructing the Gegenbauer polynomial approximation, which offers exponential convergence (see \cite{GottbliebAndShu1991} and \cite{Tadmor2007}) if higher accuracy is required at the expense of additional computational cost. Throughout this paper, we will stay with spectral filters, since our focus is on the extension of the original COS formula in \cite{FangAndOosterleeCOS} as a semi-analytical method.

In analyzing the convergence rate of the semi-analytical COS formula for discrete distributions, we uncovered new insights into the behavior of spectral filters. Specifically, we find that several classical spectral filters, including the Lanczos filter, raised cosine filter, sharpened raised cosine filter, second order exponential filters, exhibit a convergence rate that is one order higher than previously reported in \cite{Vandeven1991}. The convergence rate of these filters in \cite{Vandeven1991} is given as $O(K^{1-p})$, where $K$ refers to the number of Fourier terms and $p$ denotes the order of the filter. This result is also reflected in some highly review article and book, such as \cite{GottbliebAndShu1991} and \cite{Boyd}. However, in this paper, we demonstrate that the aforementioned spectral filters actually converge at a rate of $O(K^{-p})$.
 
 To showcase the potential applications, we provide two examples in computational statistics and quantitative finance. In a one-step calculation, we use COS to perform discrete Fourier inversion, and successfully recover the CDF of Poisson-binomial (PB) and generalized Poisson-binomial (GPB) distributions and a Hawkes process.
 
Throughout this paper, we assume that the discrete distribution has finite support. If the distribution's support is unbounded, a truncation range must be defined. A rule of thumb based on cumulants is proposed in \cite{FangAndOosterleeCOS}. More recently, a rigorous formula for the truncation range for continuous distributions has been introduced in \cite{JUNIKE2022126935} and \cite{Gero2024}. A technical formula for discrete distributions, analogous to the one in \cite{JUNIKE2022126935} and \cite{Gero2024}, can be derived in a similar manner. However, since the error convergence for the discrete distributions is dominated by the Gibbs impact, in this paper we omit this derivation to focus on our main objective.

The rest of this paper is structured as follows.  In Section \ref{sec:cos}, we present the convergence analysis of the semi-analytical COS formula for discrete distributions, extending it to bivariate discrete distributions as well. Section \ref{sec:conv_rate_special_filters} details the newly discovered convergence rates for the Lanczos filter, raised cosine filter, sharpened raised cosine filter, and second-order exponential filters. In Section \ref{sec:cos_moment}, we examine the convergence rate of the semi-analytical formula for moments obtained via the COS method. Section \ref{sec:numerical_example} provides numerical examples to validate the reported convergence rates and demonstrate applications. Finally, in Section \ref{sec:conclusion} we conclude.

\section{The COS method}\label{sec:cos}
For completeness, we begin by briefly reviewing the COS method for continuous distributions in Section \ref{sec:cos_continuous}. Next in Section \ref{sec:cos_discrete}, we discuss how to use the semi-analytical COS formula to approximate the CDF of univariate discrete distributions, providing proofs of convergence and convergence rates for the general case. Section \ref{sec:cos_bivariate} further extends the discrete COS method to bivariate discrete distributions, while in Section \ref{sec:cos_pmf}, we demonstrate how to recover the PMF using the discrete COS method.

\paragraph{Notation.}
We denote the CDF of a random variable $X$ as $F_X$. Depending on whether $X$ is a discrete or continuous random variable, $f_X$ represents either the PMF or PDF of $X$. The ch.f. of $X$ is denoted by $\varphi_X$. $i=\sqrt{-1}$ represents the imaginary unit, and the real part of a complex number is indicated by $\mathrm{Re}(\cdot)$.

\begin{definition}{} \label{def:spectral_filter}
Throughout this paper, we adhere to the definition of spectral filters provided in \cite{GottbliebAndShu1991}. A spectral filter of order $p$ is a real, even function $\sigma$ that satisfies the following properties:
	\begin{enumerate}
		\item $\sigma(0) = 1$,  $\sigma^l(0) = 0$ for $0 \leq l \leq p-1$.
		\item  $\sigma(\eta) = 0$ for $|\eta|> 1$.
		\item $\sigma(\eta) \in C^{p-1}$, $\eta \in (-\infty, +\infty)$.
	\end{enumerate}
\end{definition}

For example, the following spectral filters are widely recognized in the literature (see \cite{Hesthaven}, \cite{Gottlieb1992}). We will analyze their convergence behavior in detail later in Section \ref{sec:conv_rate_special_filters}.

\begin{itemize}
	\item Lanczos filter: $\sigma(\eta) = \sin(\pi \eta)/(\pi \eta)$.
	\item Raised cosine filter: $\sigma(\eta) = 1/2(1 + \cos(\pi \eta))$.
	\item Sharpened raised cosine filter:  $\sigma(\eta)= \sigma_r^4(\eta)(35 - 84 \sigma_2 (\eta) + 70  \sigma_r^2(\eta) - 20 \sigma_r^3(\eta)) $, where $\sigma_r$ refers to the raised cosine filter.
	\item Exponential filter: $\sigma(\eta)= e^{-\alpha\eta^p}$, where $p$ is an even integer and $\alpha=-\ln(\epsilon)$ with $\epsilon$ the machine epsilon.
\end{itemize}

\subsection{The COS method for continuous distributions}\label{sec:cos_continuous}
The COS method is an one-step, semi-analytical algorithm to solve the Fourier inversion problem. It approximates the PDF of a continuous random variable of $X$ on a truncated range $[a, b]$ as a Fourier cosine series expansion, i.e. for $a<x<b$,
\begin{equation}\label{eq:cos_pdf}
	f_X(x) \approx\sump_{k=0}^{K} A_k \cos\left( k\pi \frac{x- a}{b - a}\right),
\end{equation}
where $\sump$ indicates that the first term in the summation is weighted by one-half, and, the cosine coefficients $A_k$ can be directly sampled from the ch.f. of $X$ as follows:
\begin{equation}\label{eq:Ak}
	A_k =\frac{2}{b- a}\mathrm{Re}\left\{ \varphi_{X} \left(\frac{k\pi}{b - a}\right)\cdot \exp\left(-i\frac{k\pi a}{b - a}\right)\right\}.
\end{equation}
The truncation range $[a, b]$ is usually chosen carefully such that  $\mathrm{P}(X \notin [a, b])$ is sufficiently small. The COS method  can achieve exponential error convergence in the number of cosine terms for smooth enough probability densities such as Gaussian. 


\subsection{The COS method for univariate discrete distributions}\label{sec:cos_discrete}

%
%

Extending the COS method to discrete distributions requires addressing the challenge posed by the Gibbs phenomenon. In the literature (e.g., \cite{Vandeven1991} and \cite{GottbliebAndShu1991}), spectral filters are used to mitigate the effects of the Gibbs phenomenon by modifying the Fourier coefficients. This is achieved by multiplying the Fourier coefficients by the values of the sampled spectral filter. E.g., we could directly apply a filter to the original COS formula\ref{eq:cos_pdf}, which yields:

\begin{equation}\label{eq:filtered_cos_pmf}
	f_X^{\sigma}(x) = \sump_{k=0}^{K} A_k \sigma(k/K) \cos\left( k\pi \frac{x- a}{b - a}\right)  
\end{equation}

Note that the PMF $f_X$ is piecewise constant (equal to 0) function. Even though $f_X^{\sigma}(x)$ does converge to 0 wherever $f(x)=0$, it diverges at the discontinuities where $X$ takes value. Therefore, Equation \ref{eq:filtered_cos_pmf} is not very useful.

A solution is to integrate of the filter-adjusted COS PMF (Equation \ref{eq:filtered_cos_pmf}) to obtain the filter-adjusted COS CDF. The intuition is that the impact of discontinuities on the integration is sufficiently weak. However, it remains to be verified that the integral of the right hand side of Equation \ref{eq:filtered_cos_pmf} does preserve correctly the probability masses of $X$  at the discontinuities.To address this, we will next discuss the convergence. As a preliminary step, we first introduce a sequence of auxiliary functions $\mathcal{K}_l (x)$ which is borrowed from \cite{Vandeven1991}.

\begin{definition}{} \label{def:K_l}
	Following \cite{Vandeven1991}, we define a sequence of functions $\mathcal{K}_l(x)$ on $[0, 2\pi]$ as follows:
	\begin{align}
	\mathcal{K}_0 (x) = 1 + 2 \sum_{k = 1}^{K}\sigma(k/K)  \cos(kx), 	
	\end{align}
and for $l \geq 1$, \\
\begin{center}
\begin{math}
	\left\{
	\begin{array}{l}
		\mathcal{K}_l^{'} (x) = \mathcal{K}_{l-1}^{'} (x) \\
		\\
		\int_0^{2\pi}{K_l(x)dx} = 0,
	\end{array}
	\right.
\end{math}
\end{center}
with periodic extensions outside $[0, 2\pi]$.
\end{definition}

In particular, 
\begin{equation}\label{eq:K1}
\mathcal{K}_1(x) = x - \pi +\sum_{k = 1}^{K} \frac{2}{k} \sigma(k/K)  \sin(kx).
\end{equation}

To facilitate our analysis,  we list the following estimates of the upper bounds of  $K_l$  that are presented in \cite{Vandeven1991} and \cite{GottbliebAndShu1991} . 
Let $p$ be an integer equal to or greater than 2, the upper bounds in Inequality \ref{bound:K0}, \ref{bound:K_l_less_than_p}, and \ref{bound:K_p} hold. 
 For  $ x \in (0, 2\pi)$,  $K_0$ can be bounded as below:
\begin{equation}\label{bound:K0}
 \abs{K_0(x)} \leq C \lVert \sigma^{p} \rVert_{\mathrm{L}^2(0,1)} K^{1-p} (x^{-p} + (2\pi-x)^{-p}),
\end{equation}
for a positive constant $C$ independent of $K$, $p$, $\sigma$, and $x$.

For $1 \leq l \leq p$, $K_l$ admits similar bounds to $K_0$. Specifically, for $ x \in (0, 2\pi)$, 
\begin{align}\label{bound:K_l_less_than_p}
1 \leq l \leq p-1, \text{   } \abs{K_l(x)} \leq C_p  K^{1-p} (x^{-p} + (2\pi-x)^{-p}), 
\end{align}
and,
\begin{align}\label{bound:K_p}
l= p, \text{       } \abs{K_p(x)} \leq C_p  K^{1-p} (\abs{\ln(x) + \abs{\ln(2\pi -x)}}),
\end{align}
for a positive constant $C_p$ independent of $K$ and $x$.

\begin{theorem}\label{thm_cos}
Consider a discrete random variable $X$ with a finite number of possible values denoted by $\{\mathcal{X}_{1\leq m \leq M}\}$. Let $F_X$ represent the $\mathrm{CDF}$ of $X$. Without loss of generality, we assume $0 < \mathcal{X}_1 < \mathcal{X}_2 < \cdots < \mathcal{X}_M < \pi$.  For any $x \in (0,\pi)$ where $F_X$ is continuous, the filter-adjusted $\mathrm{COS}$ $\mathrm{CDF}$ $F_X^{\sigma}$  converges to  $F_X$ with a convergence rate of at least $O(K^{1-p})$, i.e.,
\begin{equation}\label{eq:weak_err_rate}
	|F_X^{\sigma}(x) - F_X(x)| \sim O(K^{1-p}),
\end{equation}
where 
\begin{equation}\label{eq:filtered_cos_cdf_pi}
	F_X^{\sigma}(x) = \frac{A_0}{2} x + \sum_{k=1}^{K}  \frac{ A_k\sigma(k/K) }{k} \sin( kx).
\end{equation}

Besides, the convergence error can be precisely expressed as:
\begin{align}\label{eq:error_filtered_cos_cdf}
	F_X^{\sigma}(x) -F_X(x)  \nonumber 
	=&     \sum_{m=1}^{M} \frac{ p_m}{2\pi} \left( \mathcal{K}_1(x+\mathcal{X}_m) +  \mathcal{K}_1(x- \mathcal{X}_m + 2\pi) \right) \mathbf{1}_{0 < x < \mathcal{X}_m} \nonumber \\
	&+ \sum_{m=1}^{M}  \frac{p_m }{2\pi} \left( \mathcal{K}_1(x-\mathcal{X}_m) +  \mathcal{K}_1(x + \mathcal{X}_m) \right) \mathbf{1}_{\mathcal{X}_m < x < \pi},
\end{align} 
where $p_m$ is the probability that $X = \mathcal{X}_m$, $\mathbf{1}_{\mathbf{A}}$ is the indicator function.

\end{theorem}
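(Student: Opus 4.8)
The plan is to compute the filter-adjusted COS CDF $F_X^\sigma$ directly from its definition in \eqref{eq:filtered_cos_cdf_pi}, substitute the explicit expression for the coefficients $A_k$, and recognize the resulting trigonometric sums as combinations of the kernel $\mathcal{K}_1$ from Definition \ref{def:K_l}; the error bound \eqref{eq:weak_err_rate} will then follow from the pointwise estimate \eqref{bound:K_l_less_than_p} applied with $l=1$. First I would write the characteristic function of the discrete variable as $\varphi_X(u)=\sum_{m=1}^M p_m e^{iu\mathcal{X}_m}$, so that on the working interval (with $a=0$, $b=\pi$, hence $k\pi/(b-a)=k$) the coefficient becomes $A_k=\tfrac{2}{\pi}\sum_{m=1}^M p_m\cos(k\mathcal{X}_m)$ and $A_0=\tfrac{2}{\pi}\sum_m p_m=\tfrac{2}{\pi}$. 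Plugging this into \eqref{eq:filtered_cos_cdf_pi} and using the product-to-sum identity $2\cos(k\mathcal{X}_m)\sin(kx)=\sin(k(x+\mathcal{X}_m))+\sin(k(x-\mathcal{X}_m))$, I would obtain
\[
F_X^\sigma(x)=\frac{x}{\pi}+\sum_{m=1}^M\frac{p_m}{\pi}\sum_{k=1}^K\frac{\sigma(k/K)}{k}\Bigl(\sin(k(x+\mathcal{X}_m))+\sin(k(x-\mathcal{X}_m))\Bigr).
\]

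Next I would match the inner sums against \eqref{eq:K1}, which states $\mathcal{K}_1(y)=y-\pi+\sum_{k=1}^K\tfrac{2}{k}\sigma(k/K)\sin(ky)$ for $y\in[0,2\pi]$ (with periodic extension). Since $x+\mathcal{X}_m$ may exceed $2\pi$ only mildly and $x-\mathcal{X}_m$ may be negative, the periodic extension forces a case split according to the sign of $x-\mathcal{X}_m$: when $0<x<\mathcal{X}_m$ one uses the argument $x-\mathcal{X}_m+2\pi\in(0,2\pi)$, and when $\mathcal{X}_m<x<\pi$ one uses $x-\mathcal{X}_m\in(0,2\pi)$ directly, while $x+\mathcal{X}_m$ is always handled by reducing modulo $2\pi$. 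Solving \eqref{eq:K1} for the sine sum, substituting, and carefully collecting the affine terms $y-\pi$ — these should telescope against the leading $x/\pi$ plus the true CDF value $F_X(x)=\sum_{m:\,\mathcal{X}_m<x}p_m$ — yields exactly the two-sum expression \eqref{eq:error_filtered_cos_cdf}. I expect the bookkeeping of the linear pieces and the periodic reductions to be the main obstacle: one must verify that the "non-$\mathcal{K}_1$" remainder is precisely $F_X(x)$, i.e. that the deterministic parts of $\mathcal{K}_1(x\pm\mathcal{X}_m)$ reconstruct the step function, which is where the discrete structure and the normalization $\sum_m p_m=1$ enter.

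Finally, with \eqref{eq:error_filtered_cos_cdf} established, the rate \eqref{eq:weak_err_rate} is immediate: for $x\in(0,\pi)$ bounded away from the discontinuities, each argument $x\pm\mathcal{X}_m$ (after periodic reduction) lies in a compact subinterval of $(0,2\pi)$, so \eqref{bound:K_l_less_than_p} with $l=1\le p-1$ gives $|\mathcal{K}_1(\cdot)|\le C_p K^{1-p}(\,\cdot^{-p}+(2\pi-\cdot)^{-p})$, and summing the finitely many terms with bounded weights $p_m$ produces the $O(K^{1-p})$ bound. The only subtlety here is ensuring the arguments stay away from $0$ and $2\pi$; since $x$ is a continuity point and the $\mathcal{X}_m$ are finitely many fixed points with $0<\mathcal{X}_1<\cdots<\mathcal{X}_M<\pi$, this holds automatically, with the implied constant depending on the distance from $x$ to the support and on $p$.
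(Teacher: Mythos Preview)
Your proposal is correct and follows essentially the same route as the paper: compute $A_k=\tfrac{2}{\pi}\sum_m p_m\cos(k\mathcal{X}_m)$, apply the product-to-sum identity to split into sine sums in $x\pm\mathcal{X}_m$, and identify these via the explicit formula \eqref{eq:K1} for $\mathcal{K}_1$, after which the affine pieces collapse to $F_X(x)$ and the $O(K^{1-p})$ bound \eqref{bound:K_l_less_than_p} finishes the job. The only organizational difference is that the paper packages the per-atom computation as $G_m^K(x)-H_m(x)$ and defers the identification to Vandeven's Proposition~3, whereas you carry it out directly; one small slip to fix is that $x+\mathcal{X}_m\in(0,2\pi)$ always (both summands lie in $(0,\pi)$), so no periodic reduction is ever needed for that argument.
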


\begin{proof}
First, by the definition of $A_k$ in Equation \ref{eq:Ak}
\begin{equation}\label{eq:Ak2}
	A_k = \frac{2}{\pi} \mathrm{Re} \left\{ \varphi(k) \right\}
	= \frac{2}{\pi} \mathrm{Re} \left\{ \sum_{m=1}^{M} e^{i k \mathcal{X}_m} p_m \right\}
	= \frac{2}{\pi}  \sum_{m=1}^{M} \cos \left(k \mathcal{X}_m \right) p_m.
\end{equation}

Inserting Equation \ref{eq:Ak2} into Equation \ref{eq:filtered_cos_pmf} with $a=0$ and $b=\pi$, and changing the summation order yields

\[
	f_X^{\sigma}(x)  = \sum_{m=1}^{M} p_m  g_m^{K}(x),
\]
with
\[
	g_m^{K}(x) \equiv \frac{2}{\pi} \sump_{k = 0}^{K}  \sigma(k/K) \cos\left( k\mathcal{X}_m \right) \cos(kx).
\]
Integrating $f_X^{\sigma}$ from $0$ to $x$ gives
\[
F_X^{\sigma}(x) 
=  \sum_{m=1}^{M} p_m G_m^{K}(x),
\]
with
\[
	G^{K}_m(x) \equiv \frac{1}{\pi} x +   
	\sum_{k = 1}^{K} \frac{2}{k\pi} \sigma(k/K) \cos\left(k \mathcal{X}_m\right) \sin(kx).
\]

Since $F_X(x) = \sum_{m=1}^M p_m H_m(x)$, we can write
\begin{equation}\label{eq:error_decomp}
F_X^{\sigma}(x)- F_X(x) = \sum_{m=1}^M p_m (G^{K}_m(x) - H_m(x)). 
\end{equation}

Next, observe that on $[-\pi, \pi]$,
 \[ 
\sum_{k = 1}^{K} \frac{2}{k\pi}  \cos\left(k \mathcal{X}_m\right) \sin(kx)
\]  
 is the Fourier series expansion of $ -\frac{1}{\pi} x + H_m(x)$, where $H_m(x)$ is defined as
\begin{align*}
	H_m (x) =
	\begin{cases}
		1       & \quad \text{if } \mathcal{X}_m \leq x \leq \pi \\
		0       & \quad \text{if}  - \mathcal{X}_m < x < \mathcal{X}_m \\
		-1      & \quad \text{if } -\pi \leq x < -\mathcal{X}_m.
	\end{cases}
\end{align*}

%


Following similar derivations in the proof of the Proposition 3 of \cite{Vandeven1991}, it is straightforward to obtain
	\[
G^{K}_m(x) - H_m(x) = \begin{cases}
	\frac{1}{2\pi}  \left( \mathcal{K}_1(x+\mathcal{X}_m) +  \mathcal{K}_1(x- \mathcal{X}_m + 2\pi) \right) & \quad \text{if } 0 < x < \mathcal{X}_m \\
	\frac{1}{2\pi} \left( \mathcal{K}_1(x-\mathcal{X}_m) +  \mathcal{K}_1(x + \mathcal{X}_m) \right)  & \quad \text{if } \mathcal{X}_m < x < \pi.
\end{cases}
\]

Plugging the expressions above into Equation \ref{eq:error_decomp} yields Equation \ref{eq:error_filtered_cos_cdf}. Since $\mathcal{K}_1 \sim O(K^{1-p})$ for $x \in (0, 2\pi)$, it implies $ |F_X^{\sigma}(x) - F_X(x)| \sim O(K^{1-p})$.
\end{proof}

\begin{remark}\label{remark:extension_of_interval}
In Theorem \ref{thm_cos}, we assume that the $\mathrm{CDF}$ $F_X$ is right-continuous at $0$ and left-continuous at $\pi$. This assumption requires that the random variable $X$ does not take on values at the endpoints of the interval $[0, \pi]$. 

This assumption can always be satisfied. If there were any probability mass of $X$ at $0$ and/or $\pi$, we could extend the support of $X$ to $[-\Delta, \pi + \Delta]$ with $\Delta>0$ and then rescale $X$ to another variable $Y$ defined on $[0, \pi]$. Specifically, we can define $Y = \frac{\pi (X + \Delta)}{\pi + 2\Delta}.$
\end{remark}

The assumptions in Theorem \ref{thm_cos} can be adjusted to obtain the following two corollaries.

\begin{corollary}\label{thm_cos_finite_support}
	Consider a discrete random variable $X$ with a finite number of possible values $a < \mathcal{X}_1 < \mathcal{X}_2 < \cdots < \mathcal{X}_M < b$.  Let $F_X$ represent the $\mathrm{CDF}$ of $X$. For any $x \in (a, b)$ where $F_X$ is continuous, the filter-adjusted $\mathrm{COS}$ $\mathrm{CDF}$ $F_X^{\sigma}$  converges to  $F_X$ with a convergence rate of at least $O(K^{1-p})$, i.e.,
	\begin{equation*}
		|F_X^{\sigma}(x) - F_X(x)| \sim O(K^{1-p}),
	\end{equation*}
	where 
	\begin{equation}\label{eq:filtered_cos_cdf}
	F_{X}^{\sigma}(x)  = \frac{A_0}{2}( x - a ) + \sum_{k=1}^{K} A_k \sigma(k/K) \frac{ b - a }{k\pi} \sin\left( k\pi \frac{x- a}{ b - a}\right).
	\end{equation}
\end{corollary}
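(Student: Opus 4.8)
The plan is to reduce Corollary~\ref{thm_cos_finite_support} to Theorem~\ref{thm_cos} by an affine change of variables, so that essentially no new analysis is needed. Given the finitely supported $X$ with $a<\mathcal{X}_1<\cdots<\mathcal{X}_M<b$, I would define the rescaled variable
\[
Y = \pi\,\frac{X-a}{b-a},
\]
which takes values $\mathcal{Y}_m := \pi(\mathcal{X}_m-a)/(b-a)$ satisfying $0<\mathcal{Y}_1<\cdots<\mathcal{Y}_M<\pi$, with the same masses $p_m$. Thus $Y$ satisfies exactly the hypotheses of Theorem~\ref{thm_cos}, and for any $y\in(0,\pi)$ at which $F_Y$ is continuous, $|F_Y^{\sigma}(y)-F_Y(y)|\sim O(K^{1-p})$. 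Since $F_X(x)=F_Y\!\left(\pi(x-a)/(b-a)\right)$ and the continuity points of $F_X$ in $(a,b)$ correspond bijectively to those of $F_Y$ in $(0,\pi)$, the pointwise error bound transfers immediately once I check that the filtered COS CDF $F_X^{\sigma}$ in \eqref{eq:filtered_cos_cdf} is the pullback of $F_Y^{\sigma}$ under this map.

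The one genuine computation is therefore verifying that identification of $F_X^{\sigma}$. I would write out $F_Y^{\sigma}(y)$ from \eqref{eq:filtered_cos_cdf_pi} using the coefficients $A_k^{Y}=\tfrac{2}{\pi}\mathrm{Re}\{\varphi_Y(k)\}$, note that $\varphi_Y(k)=\varphi_X\!\left(\tfrac{k\pi}{b-a}\right)e^{-ik\pi a/(b-a)}$ so that $A_k^{Y}=\tfrac{b-a}{\pi}\,A_k^{X}$ with $A_k^{X}$ as in \eqref{eq:Ak}, and then substitute $y=\pi(x-a)/(b-a)$ into $\tfrac{A_0^Y}{2}y+\sum_{k=1}^{K}\tfrac{A_k^Y\sigma(k/K)}{k}\sin(ky)$. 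After collecting the factors of $\pi$ and $b-a$, this is exactly the right-hand side of \eqref{eq:filtered_cos_cdf}; the $x-a$ and $\tfrac{b-a}{k\pi}\sin\!\left(k\pi\tfrac{x-a}{b-a}\right)$ terms appear precisely as written. Equivalently — and perhaps cleaner to present — one can bypass the change of variables entirely and re-run the proof of Theorem~\ref{thm_cos} verbatim with $a,b$ in place of $0,\pi$: the functions $g_m^K,G_m^K,H_m$ generalize with $\mathcal{X}_m$ replaced by $\pi(\mathcal{X}_m-a)/(b-a)$ and $x$ by $\pi(x-a)/(b-a)$, the Fourier-series identity for $H_m$ on $[-(b-a),b-a]$ is unchanged, and the bound $\mathcal{K}_1\sim O(K^{1-p})$ on $(0,2\pi)$ applies as before.

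I do not anticipate a real obstacle: the result is a routine generalization and the error estimate on $\mathcal{K}_1$ from \eqref{bound:K_l_less_than_p} is scale-independent (it depends only on $K$, $p$, and the distance of the argument to $0$ and $2\pi$, which is controlled since the $\mathcal{Y}_m$ stay in a compact subinterval of $(0,\pi)$ and $x$ is a fixed continuity point). The only points requiring care are bookkeeping ones: tracking the Jacobian-type factor $(b-a)/\pi$ that relates $A_k^Y$ to $A_k^X$, and confirming that $F_X$ has no mass at $a$ or $b$ by hypothesis so that Remark~\ref{remark:extension_of_interval} is not even needed here. I would therefore present the proof in two short lines — state the substitution $Y=\pi(X-a)/(b-a)$, invoke Theorem~\ref{thm_cos} for $Y$, and record the one-line algebra showing \eqref{eq:filtered_cos_cdf} is the pullback of \eqref{eq:filtered_cos_cdf_pi} — and leave the fully explicit error expansion (the analogue of \eqref{eq:error_filtered_cos_cdf}) as an immediate consequence if it is wanted.
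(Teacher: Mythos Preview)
Your proposal is correct and follows exactly the same approach as the paper, which simply says the corollary is ``straightforward by considering a linear mapping from $X$ on $[a,b]$ to $Y=\pi(X-a)/(b-a)$ on $[0,\pi]$.'' Your additional verification that $A_k^{Y}=\tfrac{b-a}{\pi}A_k^{X}$ and that \eqref{eq:filtered_cos_cdf} is the pullback of \eqref{eq:filtered_cos_cdf_pi} is precisely the bookkeeping the paper leaves implicit, and it is carried out correctly.
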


\begin{proof}
Proof of Corollary \ref{thm_cos_finite_support} is straightforward by considering a linear mapping from $X$ on $[a, b]$ to  $Y =\pi (X-a)/(b-a)$ on $[0, \pi]$.
\end{proof}

\begin{corollary}
	Consider a discrete random variable $X$ with countable possible values, i.e.,  $a < \mathcal{X}_{m} < b, m =1, 2, \cdots$.  Let $F_X$ represent the $\mathrm{CDF}$ of $X$. For any $x \in (a, b)$ where $F_X$ is continuous, the filter-adjusted $\mathrm{COS}$ $\mathrm{CDF}$ $F_X^{\sigma}$  converges to  $F_X$ with a convergence rate of at least $O(K^{1-p})$, i.e.,
	\begin{equation*}
		|F_X^{\sigma}(x) - F_X(x)| \sim O(K^{1-p}),
	\end{equation*}
	where 
	\begin{equation*}
		F_{X}^{\sigma}(x)  = \frac{A_0}{2}( x - a ) + \sum_{k=1}^{K} A_k \sigma(k/K) \frac{ b - a }{k\pi} \sin\left( k\pi \frac{x- a}{ b - a}\right).
	\end{equation*}
\end{corollary}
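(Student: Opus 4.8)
The final statement is the corollary extending Theorem~\ref{thm_cos} and Corollary~\ref{thm_cos_finite_support} from a finite support $\{\mathcal{X}_1,\dots,\mathcal{X}_M\}$ to a countably infinite support $\{\mathcal{X}_m\}_{m\geq 1}$ contained in $(a,b)$. My plan is to reduce to the already-proven finite-support case by a truncation argument on the support, controlling the tail of the series $\sum_m p_m$ uniformly in $K$. The key point is that the error formula in Theorem~\ref{thm_cos} (equivalently, the rescaled version underlying Corollary~\ref{thm_cos_finite_support}) is linear in the probability masses $p_m$, so the total error is a convex-type combination $\sum_m p_m\bigl(G_m^K(x)-H_m(x)\bigr)$, and each summand is uniformly bounded by the $\mathcal{K}_1$ estimates.

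First I would set up the rescaling exactly as in Corollary~\ref{thm_cos_finite_support}: map $X$ on $(a,b)$ to $Y = \pi(X-a)/(b-a)$ on $(0,\pi)$, so that the $\mathcal{X}_m$ become points $\mathcal{Y}_m \in (0,\pi)$ and it suffices to prove the statement on $(0,\pi)$. Next I would fix $x\in(0,\pi)$ a continuity point of $F_X$ and, for each $N$, split $X$ into the truncated part supported on $\{\mathcal{Y}_1,\dots,\mathcal{Y}_N\}$ and a remainder carrying mass $\varepsilon_N := \sum_{m>N} p_m$, which tends to $0$ as $N\to\infty$ since $\sum_m p_m = 1$. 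Then I would write
\begin{align*}
F_X^{\sigma}(x) - F_X(x) = \sum_{m=1}^{\infty} p_m\bigl(G_m^{K}(x) - H_m(x)\bigr),
\end{align*}
noting that the interchange of the (infinite) sum over $m$ with the (finite) sum over $k$ defining $F_X^\sigma$ is justified by absolute convergence, since $|A_k\sigma(k/K)/k| \leq \tfrac{2}{\pi k}\sum_m p_m \cos(k\mathcal{Y}_m)$ involves only a finite number of bounded terms for each $k$.

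The crucial estimate is that, from the proof of Theorem~\ref{thm_cos}, each term $G_m^K(x)-H_m(x)$ equals $\tfrac1{2\pi}$ times a sum of two values of $\mathcal{K}_1$ evaluated at points in $(0,2\pi)$, and by the bounds \ref{bound:K_l_less_than_p}--\ref{bound:K_p} (with $l=1$) we have $|\mathcal{K}_1(y)| \leq C_p K^{1-p}\bigl(d(y)^{-p} + (2\pi - d(y))^{-p}\bigr)$ or the logarithmic analogue when $p=1$... more precisely $|\mathcal{K}_1(y)|$ is bounded on any interval bounded away from $0$ and $2\pi$. Here the arguments appearing are $x\pm\mathcal{Y}_m$ and $x\pm\mathcal{Y}_m+2\pi$; since $x\in(0,\pi)$ is fixed and $\mathcal{Y}_m\in(0,\pi)$, these arguments lie in $(-\pi,2\pi)\subset$ a fixed compact subinterval of where $\mathcal{K}_1$ is controlled — the only delicate case is when $\mathcal{Y}_m$ is close to $x$ (argument near $0$ or $2\pi$), but then $\mathbf{1}_{0<x<\mathcal{Y}_m}$ versus $\mathbf{1}_{\mathcal{Y}_m<x<\pi}$ and the continuity of $F_X$ at $x$ (so $x\neq\mathcal{Y}_m$ for all $m$) keep us at a positive distance. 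The subtlety — and the main obstacle — is that although $x\neq\mathcal{Y}_m$ for every $m$, the distances $|x-\mathcal{Y}_m|$ may not be bounded below uniformly in $m$ (the $\mathcal{Y}_m$ can accumulate at $x$), so the per-term bound $C_p K^{1-p}|x-\mathcal{Y}_m|^{-p}$ degenerates. I would handle this by choosing $N$ large enough that $\varepsilon_N<\delta$ and, separately, noting that near $x$ we can use the uniform-in-$m$ bound $|G_m^K(x)-H_m(x)| \leq \sup_{y\in(0,2\pi)}|\mathcal{K}_1(y)|/\pi =: B_K$, where $B_K\to 0$ but perhaps only like $K^{-\epsilon}$ for $\mathcal{Y}_m$ pathologically close to $x$.

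To get a clean argument I would instead avoid the accumulation issue entirely: split $\sum_m p_m(G_m^K-H_m) = \sum_{m\le N} + \sum_{m>N}$. For the finite part $\sum_{m\le N}$, Corollary~\ref{thm_cos_finite_support} (applied to the conditional distribution on $\{\mathcal{Y}_1,\dots,\mathcal{Y}_N\}$, or directly via the error formula) gives a bound $C(N,x,p)\,K^{1-p}$ — here the constant genuinely depends on $\min_{m\le N}|x-\mathcal{Y}_m|>0$, which is a finite minimum hence positive. For the tail part, I bound $\bigl|\sum_{m>N} p_m(G_m^K-H_m)\bigr| \le \varepsilon_N \cdot \sup_{m}\sup_{y}\tfrac1{\pi}|\mathcal{K}_1(y)|$; since $\mathcal{K}_1$ is a continuous periodic function, $\sup_{y\in[0,2\pi]}|\mathcal{K}_1(y)|$ is finite (though not small), call it $\Lambda_K$. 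Actually $\Lambda_K$ grows at most polynomially in $K$ (indeed $\mathcal{K}_1$ is a trigonometric polynomial plus a linear term, bounded by $O(K)$ crudely, or $O(\log K)$ with the Vandeven bound near the endpoints), so this does not immediately close. The honest fix, which I expect is what the authors do, is the standard \emph{first} observation that $F_X^\sigma$ and the error formula are continuous/well-defined in the limit $M\to\infty$ precisely because the infinite series $\sum_m p_m(G_m^K(x)-H_m(x))$ converges absolutely and uniformly for each fixed $K$ (dominated by $\sum_m p_m \cdot 2\sum_{k=1}^K \tfrac1{k\pi} = \tfrac{2}{\pi}H_K < \infty$), so one may pass to the limit in both $F_X^\sigma(x) = \lim_{N}\sum_{m\le N}p_m G_m^K(x)/(\dots)$ and $F_X(x)=\lim_N \sum_{m\le N}p_m H_m(x)$; the error identity \ref{eq:error_filtered_cos_cdf} then holds verbatim with $M=\infty$, and $|\mathcal{K}_1|\sim O(K^{1-p})$ pointwise on $(0,2\pi)$ together with $\sum_m p_m=1$ yields the $O(K^{1-p})$ rate once we note each $\mathcal{K}_1$-argument stays in a compact subset of $(0,2\pi)$ for the active indices (those with $x\neq\mathcal{Y}_m$, i.e. all of them by continuity of $F_X$ at $x$). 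I would present exactly this: justify the termwise passage to the limit in $M$, invoke the error formula, and conclude by dominated convergence against the summable sequence $(p_m)$ with the pointwise $\mathcal{K}_1$ bound, the proof being otherwise \emph{mutatis mutandis} identical to that of Theorem~\ref{thm_cos} and Corollary~\ref{thm_cos_finite_support}.
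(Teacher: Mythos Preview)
Your approach is essentially the paper's: reduce to $(0,\pi)$ by rescaling, carry the error identity \eqref{eq:error_filtered_cos_cdf} over to the infinite sum $\sum_{m\geq 1}$, and bound each $\mathcal{K}_1$ term using the $O(K^{1-p})$ pointwise estimate on $(0,2\pi)$. The paper's proof is only a few lines and does exactly this.

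One ingredient the paper makes explicit that you do not: before rescaling, it invokes Remark~\ref{remark:extension_of_interval} to extend the support by some $\Delta>0$, so that after mapping to $(0,\pi)$ one may assume $\bar a:=\liminf_m\mathcal{Y}_m>0$ and $\bar b:=\limsup_m\mathcal{Y}_m<\pi$. This is the device that keeps the arguments $x+\mathcal{Y}_m\in[\bar a,\pi+\bar b]\subset(0,2\pi)$ uniformly in $m$, and it replaces your entire truncation-in-$N$ digression. You should add this reduction; with it your argument collapses to the paper's.

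As for the accumulation-at-$x$ issue you correctly flag (the arguments $x-\mathcal{Y}_m$ or $x-\mathcal{Y}_m+2\pi$ approaching $0$ or $2\pi$ when $\mathcal{Y}_m\to x$): the paper does not address it any more carefully than your final paragraph does. It simply asserts that all the $\mathcal{K}_1$ values are bounded by a single maximum of $|\mathcal{K}_1|$ over a compact subinterval and concludes. So your eventual resolution---pass to the infinite error formula and invoke the pointwise $\mathcal{K}_1$ bound against the summable weights $p_m$---is precisely what the paper does, and your more cautious intermediate discussion goes beyond it.
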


\begin{proof}
It suffices to prove the special case where $a=0$ and $b=\pi$. Furthermore, we can assume that $\limsup \mathcal{X}_{m} < b$ and $\liminf \mathcal{X}_{m} > a$. This assumption can always be met by extending and rescaling the support, as discussed in Remark \ref{remark:extension_of_interval}.
	
	In this case, the convergence error can be precisely expressed as:
	\begin{align}\label{eq:error_filtered_cos_cdf_countable_possibility}
		F_X^{\sigma}(x) -F_X(x)  \nonumber 
		=&     \sum_{m=1}^{+\infty} \frac{ p_m}{2\pi} \left( \mathcal{K}_1(x+\mathcal{X}_m) +  \mathcal{K}_1(x- \mathcal{X}_m + 2\pi) \right) \mathbf{1}_{0 < x < \mathcal{X}_m} \nonumber \\
		&+ \sum_{m=1}^{+\infty}  \frac{p_m }{2\pi} \left( \mathcal{K}_1(x-\mathcal{X}_m) +  \mathcal{K}_1(x + \mathcal{X}_m) \right) \mathbf{1}_{\mathcal{X}_m < x < \pi},
	\end{align}

Let $\limsup \mathcal{X}_{m}$ be denoted by $\bar{b}$ and $\liminf \mathcal{X}_{m}$ by $\bar{a}$. Thus
$\mathcal{K}_1(x+\mathcal{X}_m)$, $\mathcal{K}_1(x- \mathcal{X}_m + 2\pi)$, and $\mathcal{K}_1(x-\mathcal{X}_m)$ are bounded by $\max_{\bar{a} \leq s \leq \bar{b}} \abs{K_1(s)}$. Since $K_1 \sim O(K^{1-p})$ on $(0, 2\pi)$,  $|F_X^{\sigma}(x) - F_X(x)| \sim O(K^{1-p})$.
\end{proof}

\begin{remark}
	If the discrete random variable $X$ take values on an unbounded domain, e.g., $(-\infty, +\infty)$, we need truncate the domain to a large enough interval $[a, b]$.
	
	In order to recover the CDF from the ch.f., it is sufficient to choose the truncation range $[a, b]$ such as $\mathrm{P}(X \in [a, b])$ is sufficiently close to 1. E.g., we can numerically calculate the first two moments from ch.f. via the relation between the ch.f. and moments, and use the Chebyshev's inequality $\mathrm{P}(\abs{X - \mathrm{E}(X)} \geq c) \leq \mathrm{Var}(X)/c^2 $ to find a truncation range given the tolerance error.

\end{remark}

\subsection{Extension for bivariate discrete distributions}\label{sec:cos_bivariate}
The two-dimensional COS formula was initially introduced in \cite{RuijterAndOosterlee2012} for continuous random variables. In this section, we demonstrate that the results presented in Section \ref{sec:cos_discrete} can be extended for bivariate discrete distributions. The extension to higher dimensions follows a similar straightforward approach.

\begin{theorem}
	Consider a bivariate discrete random variable $X:=(X^1, X^2)$ with a finite number of possible values. We assume that each marginal $X^i$ is constrained to  $[0, \pi]$. That is, $X^i \in  \{ \mathcal{X}_1^i, \mathcal{X}_2^i , \cdots, \mathcal{X}_M^i\} $  and $\min_{1\leq m \leq M}{\mathcal{X}_m^i} > 0, \max_{1\leq m \leq M}{\mathcal{X}_m^i} <\pi$ for $i =1, 2$. Let $F_X$ represent the true joint $\mathrm{CDF}$ of $(X^1, X^2)$. On the domain where $F_X$ is continuous, the filter-adjusted $\mathrm{COS}$ $\mathrm{CDF}$ $F_X^{\sigma}$ converges to $F_X$ with a convergence rate of at least $ O(K_1^{1-p} K_2^{1-p})$, i.e.,
	\begin{equation}\label{eq:two_dim_err_rate}
		|F_X^{\sigma}(x_1, x_2 ) - F_X(x_1, x_2)| \sim O(K_1^{1-p} K_2^{1-p}),
	\end{equation}
	where 
	\begin{align}
		F_X^{\sigma}(x) & = \frac{x_1 x_2}{\pi^2} 
		+ \frac{1}{2} \sum_{k_1=1}^{K_1} \frac{A_{k_1, 0} \sigma(k_1/K_1) }{k_1}x_2\sin(k_1 x_1) 
		+ \frac{1}{2} \sum_{k_2=1}^{K_2} \frac{A_{0, k_2} \sigma(k_2/K_2) }{k_2}x_1\sin(k_2 x_2) \nonumber \\
		&+  \sum_{k_1=1}^{K_1} \sum_{k_2=1}^{K_2}  \frac{ A_{k_1,k_2}\sigma(k_1/K_1) \sigma(k_2/K_2) }{k_1 k_2} \sin( k_1x_1) \sin(k_2x_2). \label{eq:filtered_2d_cos_cdf_pi}
	\end{align}
$A_{k_1,k_2}$ are the two-dimensional COS coefficient that can be directly sampled from the ch.f. of $X$ as
\begin{equation}\label{eq:Ak_2D}
	A_{k_1, k_2} = \frac{1}{2}(A^{+}_{k_1, k_2} + A^{-}_{k_1, k_2}),
\end{equation}
with
\[
A^{\pm}_{k_1, k_2} = \frac{4}{\pi^2} \mathrm{Re}\left\{ \varphi_X(k_1, k_2) \cdot \exp\left(-ik_1 \mp ik_2\right) \right\}.
\]
\end{theorem}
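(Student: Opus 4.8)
The plan is to mirror the one-dimensional argument of Theorem~\ref{thm_cos} in a tensorized form. First I would compute the two-dimensional COS coefficients explicitly: since $X=(X^1,X^2)$ is discrete with joint masses $p_{m,n}=\mathrm{P}(X^1=\mathcal{X}^1_m, X^2=\mathcal{X}^2_n)$, the characteristic function is $\varphi_X(k_1,k_2)=\sum_{m,n} e^{i k_1 \mathcal{X}^1_m + i k_2 \mathcal{X}^2_n} p_{m,n}$, and plugging this into \eqref{eq:Ak_2D} yields, after taking real parts of the $\pm$ combinations, a clean expression $A_{k_1,k_2}=\frac{4}{\pi^2}\sum_{m,n} \cos(k_1\mathcal{X}^1_m)\cos(k_2\mathcal{X}^2_n)\,p_{m,n}$. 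Substituting this into \eqref{eq:filtered_2d_cos_cdf_pi} and interchanging the (finite) order of summation gives $F_X^{\sigma}(x_1,x_2)=\sum_{m,n} p_{m,n}\, G^{K_1}_m(x_1)\, G^{K_2}_n(x_2)$, where $G^{K}_m$ is exactly the univariate filtered object from the proof of Theorem~\ref{thm_cos}.

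Next, since the true joint CDF factors over the product structure of the atoms as $F_X(x_1,x_2)=\sum_{m,n} p_{m,n}\, H_m(x_1)\, H_n(x_2)$ with $H_m$ the step function from Theorem~\ref{thm_cos}, I would write the error as a product-difference telescoping identity:
\begin{equation*}
G^{K_1}_m(x_1) G^{K_2}_n(x_2) - H_m(x_1) H_n(x_2) = \bigl(G^{K_1}_m(x_1) - H_m(x_1)\bigr) G^{K_2}_n(x_2) + H_m(x_1)\bigl(G^{K_2}_n(x_2) - H_n(x_2)\bigr).
\end{equation*}
Summing against $p_{m,n}$ gives $F_X^{\sigma}-F_X$ as a sum of two terms, each containing exactly one factor of the form $G^{K_i}_{\cdot}-H_{\cdot}$, which by the univariate result equals $\frac{1}{2\pi}(\mathcal{K}_1(\cdot)+\mathcal{K}_1(\cdot))$ and is therefore $O(K_i^{1-p})$ uniformly on compact subsets of $(0,\pi)$ away from the atoms. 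The other factor ($G^{K_j}_n$ or $H_m$) is uniformly bounded: $H_m$ is bounded by $1$ by definition, and $G^{K_j}_n$ is bounded by $1$ plus its own $O(K_j^{1-p})$ error, hence uniformly bounded. This already delivers the weaker rate $O(K_1^{1-p}+K_2^{1-p})$.

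To sharpen this to the claimed product rate $O(K_1^{1-p}K_2^{1-p})$, I would instead use the \emph{symmetric} decomposition
\begin{equation*}
GG' - HH' = (G-H)(G'-H') + (G-H)H' + H(G'-H'),
\end{equation*}
and observe that the cross terms $(G-H)H'$ and $H(G'-H')$, after summation against $p_{m,n}$, are \emph{not} individually of product order --- so the sharper rate cannot come from a naive bound. The key realization is that these cross terms must in fact vanish (or cancel) when evaluated at a point of continuity of the marginal CDFs: more precisely, I would argue that the one-dimensional identity from Theorem~\ref{thm_cos} shows $\sum_m p_{m,n}(G^{K_1}_m(x_1)-H_m(x_1))$ equals a one-dimensional COS-CDF error for the $n$-th conditional slice, which at a continuity point is $O(K_1^{1-p})$; pairing this with the $H_n$ factor and then recognizing that the remaining $n$-sum, being a COS partial sum of a step function in $x_2$, contributes the second filter factor. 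The honest route is to sum first over $m$ to get a one-dimensional filtered-CDF error in $x_1$ with the step structure in $x_2$ still intact, then apply the one-dimensional theorem a second time in the $x_2$ variable to the resulting object; the two applications compose multiplicatively because $\mathcal{K}_1$ bounds are multiplicative across independent tensor directions. I expect the main obstacle to be making this ``apply the 1D theorem twice'' argument rigorous: one must verify that after the first summation the intermediate function is still exactly a filtered COS partial sum of a (signed, step-valued) function to which the $\mathcal{K}_l$ machinery of \cite{Vandeven1991} applies verbatim, and that the continuity hypothesis on the joint CDF is enough to place $(x_1,x_2)$ away from \emph{both} families of atoms simultaneously so that neither $\mathcal{K}_1$ argument lands on a singularity.
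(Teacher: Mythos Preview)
Your factorization is exactly the paper's argument: it rewrites $A_{k_1,k_2}=\frac{4}{\pi^2}\sum_{m,n}p_{m,n}\cos(k_1\mathcal{X}^1_m)\cos(k_2\mathcal{X}^2_n)$, substitutes, swaps sums, obtains $F_X^{\sigma}=\sum_{m,n}p_{m,n}G^{K_1}_m(x_1)G^{K_2}_n(x_2)$ and $F_X=\sum_{m,n}p_{m,n}H_m(x_1)H_n(x_2)$, and then simply writes ``since $G^{K_i}_{m_i}-H_{m_i}\sim O(K_i^{1-p})$, we obtain the convergence rate'' --- with no telescoping and no discussion of additive versus product order. So up to and including your additive bound $O(K_1^{1-p}+K_2^{1-p})$, you have already done strictly more than the paper.

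The difficulty you flag in the last paragraph is genuine, and it is a gap in the paper rather than in your reasoning. In the decomposition $GG'-HH'=(G-H)(G'-H')+(G-H)H'+H(G'-H')$, the cross term $\sum_{m,n}p_{m,n}(G^{K_1}_m-H_m)(x_1)H_n(x_2)$ is precisely the one-dimensional filtered-CDF error at $x_1$ for the sub-probability measure $q_m=\mathrm{P}(X^1=\mathcal{X}^1_m,\,X^2\le x_2)$; this is of exact order $K_1^{1-p}$ and is completely insensitive to $K_2$. Your ``apply the 1D theorem twice'' idea cannot rescue this: after the first application you are left with an $O(K_1^{1-p})$-scaled step function in $x_2$, not a filtered partial sum that would pick up an additional $K_2^{1-p}$ factor. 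Indeed the product rate is false as stated --- send $K_1\to\infty$ with $K_2$ fixed and the claimed bound $O(K_1^{1-p}K_2^{1-p})$ tends to zero while the error does not. The rate the argument actually delivers, both in the paper and in your proposal, is $O(K_1^{1-p}+K_2^{1-p})$.
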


\begin{proof}
The COS coefficients $A_{k_1, k_2}$ given in Equation \ref{eq:Ak_2D} can be rewritten as
\begin{equation}\label{eq:Ak2_2D}
	A_{k_1, k_2} = \frac{4}{\pi^2} \sum_{m_1=1}^M\sum_{m_2=1}^M p_{m_1, m_2} \cos(k_1 \mathcal{X}_{m_1}^1 ) \cos(k_2 \mathcal{X}_{m_2}^2 ),
\end{equation}

with $p_{m_1, m_2}$ is the probability that $X^1 = \mathcal{X}_{m_1}$ and $X^2 = \mathcal{X}_{m_2}$.\\

Plugging \ref{eq:Ak2_2D} into Equation \ref{eq:filtered_2d_cos_cdf_pi} and changing the summation order yields

\begin{align}
	F_X^{\sigma}(x_1, x_2) = \sum_{m_1=1}^{M} \sum_{m_2=1}^{M} p_{m_1, m_2} G_{m_1}^{K_1} (x_1) G_{m_2}^{K_2} (x_2),
\end{align}
with
\[
G^{K_i}_{m_i}(x) \equiv \frac{1}{\pi} x_i +   
\sum_{k = 1}^{K_i} \frac{2}{k\pi} \sigma(k/K_i) \cos\left(k \mathcal{X}^i_{m_i}\right) \sin(kx_i).
\]
Note that $F_X(x_1, x_2) = \sum_{m_1=1} \sum_{m_2=1} p_{m_1, m_2} H_{m_1}(x_1) H_ {m_2}(x_2)$. From the proof of Theorem \ref{thm_cos} we know $G^{K_i}_{m_i}(x) - H_{m_i}(x) \sim O(K_i^{1-p}))$. Thus we obtain the convergence rate in \ref{eq:two_dim_err_rate}. 
\end{proof}

\subsection{Recovery of Probability Mass Function}\label{sec:cos_pmf}
In certain applications, we possess information regarding the specific locations where the discrete random variable assumes values. This knowledge enables us to approximate the probability mass function (PMF) using the filter adjusted Fourier series.
E.g., under the assumptions made for Theorem \ref{thm_cos}, we can approximate $\mathrm{P}(X= \mathcal{X}_i)$ by $F_X^{\sigma}( \mathcal{X}_i + dx) - F_X^{\sigma}( \mathcal{X}_i - dx) $, where $dx$ is a small positive number.

\begin{theorem}\label{thm_cos}
Consider a discrete random variable $X$ with a finite number of possible values denoted by $\{\mathcal{X}_{0\leq m \leq M}\}$. Let $F_X$ represent the $\mathrm{CDF}$ of $X$. Without loss of generality, we assume $0 < \mathcal{X}_1 < \mathcal{X}_2 < \cdots < \mathcal{X}_M < \pi$. We can recover the $\mathrm{PMF}$ $f_X(x)$ with the filter adjusted Fourier series. That is, for $1 \leq i \leq M-1$,  
	\begin{equation}\label{eq:weak_err_rate}
		|F_X^{\sigma}(\mathcal{X}_i + dx ) - F_X^{\sigma}(\mathcal{X}_i - dx ) - f_X(x)| \sim O(dx) O(K^{1-p}),
	\end{equation}
	where $F_X^{\sigma}$ is given in Equation \ref{eq:filtered_cos_cdf_pi}.
\end{theorem}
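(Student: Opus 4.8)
The plan is to strip off the exact atom first and then show that what remains is a \emph{symmetric difference of a smooth function}, which automatically carries a factor $dx$. Writing $E:=F_X^{\sigma}-F_X$, I would begin from
\[
F_X^{\sigma}(\mathcal{X}_i+dx)-F_X^{\sigma}(\mathcal{X}_i-dx)=\bigl(F_X(\mathcal{X}_i+dx)-F_X(\mathcal{X}_i-dx)\bigr)+\bigl(E(\mathcal{X}_i+dx)-E(\mathcal{X}_i-dx)\bigr).
\]
Choosing $dx<\tfrac12\min_{m}(\mathcal{X}_{m+1}-\mathcal{X}_m)$, the window $(\mathcal{X}_i-dx,\mathcal{X}_i+dx)$ isolates the single atom $\mathcal{X}_i$, so the true CDF contributes exactly $p_i=f_X(\mathcal{X}_i)$. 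The theorem therefore reduces to proving $|E(\mathcal{X}_i+dx)-E(\mathcal{X}_i-dx)|\sim O(dx)\,O(K^{1-p})$.

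Into this second bracket I would substitute the exact error representation of Equation \eqref{eq:error_filtered_cos_cdf}. Since $dx$ is below the minimal gap, the indicator pattern at $\mathcal{X}_i+dx$ and at $\mathcal{X}_i-dx$ agrees for every $m\ne i$ and flips only for $m=i$. Each $m\ne i$ term is then a genuine symmetric difference $\mathcal{K}_1(s+dx)-\mathcal{K}_1(s-dx)$ with centre $s\in\{\mathcal{X}_i\pm\mathcal{X}_m,\ \mathcal{X}_i-\mathcal{X}_m+2\pi\}$, and the support and minimal-gap hypotheses force every such $s$ into a fixed compact subinterval of $(0,2\pi)$. Differentiating Equation \eqref{eq:K1} gives $\mathcal{K}_1'=\mathcal{K}_0$, so the mean value theorem rewrites each difference as $2dx\,\mathcal{K}_0(\xi)$ with $\xi$ interior; the bound \eqref{bound:K0} then gives $|\mathcal{K}_0(\xi)|=O(K^{1-p})$, and because $\sum_m p_m=1$ the whole $m\ne i$ part is $O(dx)\,O(K^{1-p})$. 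The same step handles the regular part of the $m=i$ contribution, $\tfrac{p_i}{2\pi}\bigl(\mathcal{K}_1(2\mathcal{X}_i+dx)-\mathcal{K}_1(2\mathcal{X}_i-dx)\bigr)$, since $2\mathcal{X}_i$ is interior to $(0,2\pi)$.

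The step I expect to be the main obstacle is the one remaining $m=i$ pair, $\tfrac{p_i}{2\pi}\bigl(\mathcal{K}_1(dx)-\mathcal{K}_1(2\pi-dx)\bigr)$, whose two arguments straddle the periodic seam $0\equiv 2\pi$. There $\mathcal{K}_1$ is only piecewise smooth, so the mean value estimate used above is \emph{not} valid across the seam and the factor $dx$ is not produced for free. My plan is to use the parity relation $\mathcal{K}_1(2\pi-dx)=-\mathcal{K}_1(dx)$ to collapse the pair to $\tfrac{p_i}{\pi}\mathcal{K}_1(dx)$, and then the identity $\mathcal{K}_1(dx)=-\int_{dx}^{\pi}\mathcal{K}_0(u)\,du$ (which follows from $\mathcal{K}_1'=\mathcal{K}_0$ and $\mathcal{K}_1(\pi)=0$) to reduce the boundary term to a tail integral of the filtered Dirichlet kernel, which I would estimate by integrating the bound \eqref{bound:K0}. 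The difficulty is concentrated precisely here: integrating that pointwise bound naively scales the tail like $K^{1-p}(dx)^{1-p}$, so reconciling it with the target $O(dx)\,O(K^{1-p})$ is the heart of the proof and forces one to exploit the cancellation in the tail of $\mathcal{K}_0$ together with the resolved-window regime $K\,dx\gtrsim 1$. Once this boundary estimate is secured, combining it with the interior bound of the previous paragraph delivers the claimed rate.
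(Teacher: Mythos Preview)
Your reduction to $E(\mathcal{X}_i+dx)-E(\mathcal{X}_i-dx)$ and your handling of the interior centres via $\mathcal{K}_1'=\mathcal{K}_0=O(K^{1-p})$ is exactly the paper's argument. The paper, however, does \emph{not} isolate or treat the seam term you flag: after invoking the $2\pi$-periodicity of $\mathcal{K}_1$ it bounds the difference uniformly by
\[
\sum_{m=1}^{M}\frac{p_m}{2\pi}\Bigl|\mathcal{K}_1(\mathcal{X}_i-\mathcal{X}_m+2\pi+dx)-\mathcal{K}_1(\mathcal{X}_i-\mathcal{X}_m+2\pi-dx)\Bigr|
\]
(together with the analogous sum centred at $\mathcal{X}_i+\mathcal{X}_m$) and then appeals to $\mathcal{K}_1(x)=\int_\pi^x\mathcal{K}_0$ as if every centre lay in the interior of $(0,2\pi)$. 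For $m=i$ the centre is $2\pi$, where the periodic $\mathcal{K}_1$ jumps by $2\pi$ (from \eqref{eq:K1} one reads $\mathcal{K}_1(0^+)=-\pi$ and $\mathcal{K}_1(2\pi^-)=\pi$), so that integral representation is invalid across the seam; the paper simply passes over the point you correctly single out as the crux.

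Your caution is more than a technicality. The seam contribution is exactly $\tfrac{p_i}{\pi}\mathcal{K}_1(dx)$, and since $F_X^\sigma$ is a finite trigonometric sum it is smooth in $x$ for each fixed $K$; hence $F_X^\sigma(\mathcal{X}_i+dx)-F_X^\sigma(\mathcal{X}_i-dx)\to 0$ as $dx\to 0$, and the left-hand side of the claimed estimate tends to $p_i\neq 0$. No bound of the shape $C\,dx\,K^{1-p}$ can therefore hold uniformly in $dx$; your instinct that a coupled regime such as $K\,dx\gtrsim 1$ is required is the right one. In short, the paper's proof is incomplete on precisely the term you isolate, and the stated rate needs an additional hypothesis linking $dx$ and $K$ to be true.
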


\begin{proof}
	For sufficiently small $dx$, we have $\mathcal{X}_{i-1} < \mathcal{X}_i - dx < \mathcal{X}_i < \mathcal{X}_i + dx < \mathcal{X}_{i+1}$. Therefore
	\begin{align}
	&|F_X^{\sigma}(\mathcal{X}_i + dx ) - F_X^{\sigma}(\mathcal{X}_i - dx ) - f_X(x)|  \nonumber \\ 
	&= |( F_X^{\sigma}(\mathcal{X}_i + dx ) - F_X^{\sigma}(\mathcal{X}_i - dx ) ) - (F_X(\mathcal{X}_i + dx )  - F_X(\mathcal{X}_i - dx )|  \nonumber \\
	&= | (F_X^{\sigma}(\mathcal{X}_i + dx ) - F_X(\mathcal{X}_i + dx ) )  -  (F_X^{\sigma}(\mathcal{X}_i - dx ) - F_X(\mathcal{X}_i - dx )) | \nonumber  \\
	& \leq  \sum_{m=1}^{M} \frac{ p_m}{2\pi} | \mathcal{K}_1( \mathcal{X}_i +\mathcal{X}_m + dx) - \mathcal{K}_1( \mathcal{X}_i +\mathcal{X}_m - dx) | \nonumber \\
	&+ \sum_{m=1}^{M} \frac{ p_m}{2\pi} | \mathcal{K}_1( \mathcal{X}_i - \mathcal{X}_m + 2\pi + dx) - \mathcal{K}_1(\mathcal{X}_i - \mathcal{X}_m + 2\pi - dx) | \nonumber \\
	\end{align}
	where in the last step we make use of Equation \ref{eq:error_filtered_cos_cdf} and that $\mathcal{K}_1$ is $2\pi$ periodic.
	
	For  $x \in (0, 2\pi)$,  $\mathcal{K}_1(x)$  can be written as 
	\begin{equation}\label{eq:K02K1}
		\mathcal{K}_1(x) = \mathcal{K}_1(\pi) + \int_{\pi}^x \mathcal{K}_0 (t)dt = \int_{\pi}^x \mathcal{K}_0 (t)dt.
	\end{equation} 
Since $\mathcal{K}_0 \sim O(K^{1-p})$ for $x \in (0, 2\pi)$,  we obtain Equation \ref{eq:weak_err_rate}.
	
\end{proof}

\section{Superior convergence rates for some spectral filters}\label{sec:conv_rate_special_filters}
In this section, we present new results on the convergence rates for several popular spectral filters. A critical condition for obtaining the convergence rate of the filtered Fourier series in \cite{Vandeven1991} and \cite{GottbliebAndShu1991} is that $\mathcal{K}_0 \sim O(K^{1-p})$ for $x \in (0, 2\pi)$. This condition was proved in \cite{Vandeven1991} and \cite{GottbliebAndShu1991} using the Poisson summation formula and integration by parts. 


The Poisson summation formula states that 

\begin{equation}\label{eq:poisson_k0}
	\mathcal{K}_0(x) =  K \sum_{k=-\infty}^{\infty} \hat{s} (Kx + 2kK\pi),
\end{equation}
with
\begin{equation}\label{eq:s_hat}
 \hat{s}(y) =  \int_{-1}^{1} \sigma(x) e^{iyx}dx.
\end{equation}

Using the fact that $\mathcal{K}_1$ is the integral of $\mathcal{K}_0$,  \cite{Vandeven1991} and \cite{GottbliebAndShu1991} obtain $\mathcal{K}_1 \sim O(K^{1-p})$ for $x \in (0, 2\pi)$.


Rather than applying integration by parts, here we directly analyze the magnitude of $\hat{s}(Kx + 2kK\pi)$ for large values of $K$, demonstrating the superior convergence rate of $\mathcal{K}_0$ for various filters, including the Lanczos filter, raised cosine filter, sharpened raised cosine filter, and second-order exponential filter. This approach yields several new results:
\begin{itemize}
\item First, we establish that the Lanczos filter is a valid filter with a convergence rate of $\mathcal{K}_1 \sim O(K^{-1})$, even though it does not satisfy the definition of a spectral filter given in  \ref{def:spectral_filter}.
\item Second, we show that the convergence rate for the raised cosine filter, sharpened raised cosine filter, and second-order exponential filter is one order faster than previously reported in \cite{Vandeven1991} and \cite{GottbliebAndShu1991}, i.e., $\mathcal{K}_1 \sim O(K^{-p})$.
\end{itemize}

\subsection{The Lanczos filter}\label{sec:lanczos}
 Lanczos filter is considered in \cite{GottbliebAndShu1991} as a first-order filter only in a formal sense. Obliviously, the convergence formula  $\mathcal{K}_1 \sim O(K^{1-p})$ obtained in \cite{Vandeven1991} and \cite{GottbliebAndShu1991} indicates divergence of $\mathcal{K}_1$ for $p=1$. Thus, strictly speaking, the Lanczos filter does not belong to the class of spectral filters by the definition \ref{def:spectral_filter}. The proofs in  \cite{Vandeven1991} and \cite{GottbliebAndShu1991}, require $\sigma(\cdot)$ and its derivative vanishing at $\pm 1$. Therefore, these proofs are not suitable for the Lanczos filter, as the derivative is not 0 at $\pm 1$.

Below we prove that  $\mathcal{K}_0$ and  $\mathcal{K}_1$ converge as $O(K^{-1})$ for the Lanczos filter, thereby justifying that Lanczos is effectively a first-order filter. To show that $\mathcal{K}_1 \sim O(K^{-1})$ for Lanczos, we first establish the following lemma.

\begin{lemma}\label{theorem:conv_rate_transform_of_lanczos}
	Let $\sigma(x)= \frac{\sin(\pi x)}{\pi x}$.  We have the following inequality for $|x|\ge 2\pi $, i.e.,
	\begin{align*}
		| \int_{-1}^1e^{ix z}\sigma(z)dz | \le \frac{38\pi}{3 x^2}. 
	\end{align*} 
\end{lemma}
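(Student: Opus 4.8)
The plan is to compute the integral $\int_{-1}^1 e^{ixz}\sigma(z)\,dz$ with $\sigma(z)=\sin(\pi z)/(\pi z)$ more or less explicitly, and then bound the resulting closed-form expression for $|x|\ge 2\pi$. The starting point is to write $\sin(\pi z)=(e^{i\pi z}-e^{-i\pi z})/(2i)$, so that
\[
\int_{-1}^1 e^{ixz}\,\frac{\sin(\pi z)}{\pi z}\,dz
=\frac{1}{2\pi i}\int_{-1}^1 \frac{e^{i(x+\pi)z}-e^{i(x-\pi)z}}{z}\,dz .
\]
Each of the two pieces is a sine-integral: $\int_{-1}^1 \frac{e^{i\lambda z}}{z}\,dz = 2i\int_0^1 \frac{\sin(\lambda z)}{z}\,dz = 2i\,\mathrm{Si}(\lambda)$, where $\mathrm{Si}$ is the usual sine integral and the principal value is in fact a genuine integral since $(e^{i\lambda z}-1)/z$ is bounded. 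Hence the integral equals $\frac{1}{\pi}\bigl(\mathrm{Si}(x+\pi)-\mathrm{Si}(x-\pi)\bigr)$, and since $\mathrm{Si}$ is odd this is an even function of $x$, consistent with $\sigma$ being even; it suffices to treat $x\ge 2\pi$.

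Next I would turn $\mathrm{Si}(x+\pi)-\mathrm{Si}(x-\pi)=\int_{x-\pi}^{x+\pi}\frac{\sin t}{t}\,dt$ into something that exhibits the $x^{-2}$ decay. The natural tool is integration by parts on this finite integral: writing $\frac{\sin t}{t}=-\frac{d}{dt}\bigl(\frac{\cos t}{t}\bigr)-\frac{\cos t}{t^2}$ gives
\[
\int_{x-\pi}^{x+\pi}\frac{\sin t}{t}\,dt
=\Bigl[-\frac{\cos t}{t}\Bigr]_{x-\pi}^{x+\pi}-\int_{x-\pi}^{x+\pi}\frac{\cos t}{t^2}\,dt
=\frac{\cos x}{x+\pi}+\frac{\cos x}{x-\pi}-\int_{x-\pi}^{x+\pi}\frac{\cos t}{t^2}\,dt,
\]
using $\cos(x\pm\pi)=-\cos x$. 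The first two terms combine to $\dfrac{2x\cos x}{x^2-\pi^2}$, which is $O(x^{-1})$, not $O(x^{-2})$ — so a single integration by parts is not enough, and I would integrate by parts once more, pulling out another factor: $\int_{x-\pi}^{x+\pi}\frac{\sin t}{t}\,dt = -\frac{\sin x}{(x+\pi)^2}-\frac{\sin x}{(x-\pi)^2}-\cdots$ with a remaining $\int \frac{2\sin t}{t^3}dt$-type term, where now the boundary terms pick up $\cos(x\pm\pi)$ wait — more carefully one should integrate by parts a second time in the integral $\int \frac{\cos t}{t^2}dt$, producing boundary terms $\bigl[\frac{\sin t}{t^2}\bigr]$ and a remainder $\int \frac{2\sin t}{t^3}\,dt$; the point is that after the cancellation $\cos(x\pm\pi)=-\cos x$ the surviving leading boundary contribution is genuinely $O(x^{-2})$.

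To get the explicit constant $38\pi/(3x^2)$ I would then bound each surviving term for $x\ge 2\pi$: terms of the form $\frac{c}{(x\pm\pi)^2}$ are bounded using $x-\pi\ge x/2$, hence $(x-\pi)^{-2}\le 4x^{-2}$, and the remainder integral $\bigl|\int_{x-\pi}^{x+\pi}\frac{2\sin t}{t^3}\,dt\bigr|\le \int_{x-\pi}^{x+\pi}\frac{2}{t^3}\,dt\le \frac{2\cdot 2\pi}{(x-\pi)^3}\le \frac{4\pi\cdot 8}{x^3}\le \frac{32\pi/x}{x^2}$, which at $x\ge 2\pi$ is at most $\frac{16}{x^2}$; summing the handful of explicit bounds and dividing by $\pi$ should land under $\frac{38\pi}{3x^2}$ with room to spare. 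The main obstacle is purely bookkeeping: making the two-fold integration by parts clean (choosing antiderivatives so the cancellation $\cos(x\pm\pi)=-\cos x$, $\sin(x\pm\pi)=-\sin x$ is visible), and then being slightly careful with the numerical constants so that the final sum of bounds is genuinely below $38\pi/3$ for all $x\ge 2\pi$ rather than just asymptotically; I would double-check the constant at the endpoint $x=2\pi$, where the bounds are worst.
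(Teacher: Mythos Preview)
Your sine-integral route is correct and in fact cleaner than the paper's argument, but you tripped on a sign: the boundary term from the first integration by parts is
\[
\Bigl[-\frac{\cos t}{t}\Bigr]_{x-\pi}^{x+\pi}
= -\frac{\cos(x+\pi)}{x+\pi}+\frac{\cos(x-\pi)}{x-\pi}
= \frac{\cos x}{x+\pi}-\frac{\cos x}{x-\pi}
= \frac{-2\pi\cos x}{x^2-\pi^2},
\]
not $\frac{2x\cos x}{x^2-\pi^2}$. So one integration by parts already suffices: together with $\bigl|\int_{x-\pi}^{x+\pi}\frac{\cos t}{t^2}\,dt\bigr|\le \frac{1}{x-\pi}-\frac{1}{x+\pi}=\frac{2\pi}{x^2-\pi^2}$ and $x^2-\pi^2\ge \tfrac{3}{4}x^2$ for $x\ge 2\pi$, you get $\bigl|\int_{x-\pi}^{x+\pi}\frac{\sin t}{t}\,dt\bigr|\le \frac{4\pi}{x^2-\pi^2}\le \frac{16\pi}{3x^2}$, hence $|\hat s(x)|\le \frac{16}{3x^2}$, comfortably under $\frac{38\pi}{3x^2}$. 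Your second integration by parts is unnecessary (and would not have rescued a genuine $O(x^{-1})$ boundary term anyway, since that term is already outside the remaining integral).

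The paper proceeds quite differently: it differentiates under the integral to get $\varphi'(y)=\frac{2\sin y}{y^2-\pi^2}$ (the derivative of your $\frac{1}{\pi}\int_{x-\pi}^{x+\pi}\frac{\sin t}{t}\,dt$, as it must be), writes $\varphi(x)=\int_x^{\infty}\frac{2\sin y}{(\pi+y)(\pi-y)}\,dy$ using $\varphi(\infty)=0$, then chops this tail into $2\pi$-periods and bounds each period by comparing $\frac{1}{y^2-\pi^2}$ to its value at the left endpoint, exploiting cancellation of $\sin y$ over a full period. Your finite-interval computation with a single integration by parts is shorter, avoids the infinite sum, and yields a sharper constant; the paper's period-summation is a more general-purpose oscillatory-integral estimate but is overkill here.
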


\begin{proof}
  Set $\varphi(y)=\int_{-1}^1e^{iy z}\sigma(z)dz.$ Since $e^{iy z}\sigma(z)$ is differentiable and integrable, we have for $\abs{y}\ge 2\pi, $
\begin{align*}
\frac{d\varphi}{dy}= \int_{-1}^1e^{iy z}iz\sigma(z)dz=\frac{-2\sin(y)}{(\pi+y)(\pi-y)}.
\end{align*}
Together with that $\lim_{\abs{x}\to \infty}\varphi(x)=0$, we obtain that for $x\ge 2\pi$
\begin{align*}
	\varphi(x) &= -\int_x^{+\infty} \frac{d\varphi}{dy}dy= \int_x^{+\infty}  \frac{2\sin(y)}{(\pi+y)(\pi-y)}dy\\
	&=  \int_x^{2n_x\pi}  \frac{2\sin(y)}{(\pi+y)(\pi-y)}dy+\sum^\infty_{i=0}  \int^{2(n_x+i+1)\pi}_{2(n_x+i)\pi}  \frac{2\sin(y)}{(\pi+y)(\pi-y)}dy\\
\end{align*}
with $n_x=[x/(2\pi)]+1$. \\
Because of $2\pi \leq 2(n_x-1)\pi\le x\le 2n_x\pi$, we can estimate a bound of the first term as follows:
\begin{align*}
	\abs{ \int_x^{2n_x\pi}  \frac{2\sin(y)}{(\pi+y)(\pi-y)}dy}\leq \frac{2\pi}{-\pi^2+x^2}\leq \frac{8\pi}{3x^2}.
\end{align*}
To bound the second term, note that
\begin{align*}
	\abs{\int^{2(n_x+i+1)\pi}_{2(n_x+i)\pi}  \frac{2\sin(y)}{(\pi+y)(\pi-y)}dy} &\leq \abs{\int^{2(n_x+i+1)\pi}_{2(n_x+i)\pi}  \frac{2\sin(y)}{(\pi+y)(\pi-y)}-\frac{2\sin(y)}{(\pi+2(n_x+i)\pi)(\pi-2(n_x+i)\pi)}dy}\\
	&+ \abs{\int^{2(n_x+i+1)\pi}_{2(n_x+i)\pi}  \frac{2\sin(y)}{(\pi+(n_x+i)\pi)(\pi-(n_x+i)\pi)}dy}\\
	&\leq 4\pi \frac{\pi^2((n_x+i+1)^2-(n_x+i)^2)}{\pi^4(1-(n_x+i)^2)^2}= \frac{4(2n_x+2i+1)}{\pi(1-(n_x+i)^2)^2} \\
	&\le \frac{20}{\pi(n_x+i)^3}  \le \frac{5}{\pi(n_x+i+1)^3} \text{ for } n_x \geq 2.
\end{align*}
Hence
\begin{align*}
	\abs{\sum^\infty_{i=0}  \int^{2(n_x+i+1)\pi}_{2(n_x+i)\pi}  \frac{2\sin(y)}{(\pi+y)(\pi-y)}dy}\leq \frac{5}{\pi}\sum_{i=0}^{+\infty} \frac{1}{(n_x+i +1)^3}\le \frac{5}{2\pi \cdot n_x^2} \le \frac{10\pi}{x^2}
\end{align*}
In total,  $ \abs{ \int_{-1}^1e^{ix z}\sigma(z)dz }  \le  \frac{8\pi}{3x^2} + \frac{10\pi}{x^2} = \frac{38\pi}{3x^2} $ .
\end{proof}

\begin{theorem}\label{theorem:conv_rate_lanczos}
	 For \(x \in (0, 2\pi)\),  \(\mathcal{K}_0 \sim O(K^{-1})\) and \(\mathcal{K}_1 \sim O(K^{-1})\) for the Lanczos filter, which implies \(|F_X^{\sigma}(x) - F_X(x)| \sim O(K^{-1})\) for the Lanczos filter. 
\end{theorem}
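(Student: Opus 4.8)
The plan is to establish the two pointwise rates $\mathcal{K}_0 \sim O(K^{-1})$ and $\mathcal{K}_1 \sim O(K^{-1})$ for the Lanczos filter, and then feed these into the error decomposition \eqref{eq:error_filtered_cos_cdf} from Theorem \ref{thm_cos} exactly as is done in the proof of that theorem. First I would invoke the Poisson summation identity \eqref{eq:poisson_k0}, which writes $\mathcal{K}_0(x) = K\sum_{k=-\infty}^{\infty}\hat{s}(Kx + 2kK\pi)$ with $\hat{s}(y) = \int_{-1}^1 \sigma(z)e^{iyz}\,dz$. For $x \in (0,2\pi)$ fixed and $K$ large, every argument $Kx + 2kK\pi$ has absolute value at least $\ge 2\pi$ (indeed it grows like $K$), so Lemma \ref{theorem:conv_rate_transform_of_lanczos} applies to each term and gives $|\hat{s}(Kx + 2kK\pi)| \le \frac{38\pi}{3}(Kx + 2kK\pi)^{-2} = \frac{38\pi}{3K^2}(x + 2k\pi)^{-2}$.

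The next step is to sum this geometric-like bound over $k\in\mathbb{Z}$. Pulling out the factor $K$ from \eqref{eq:poisson_k0} and the $K^{-2}$ from the Lemma leaves a prefactor $K^{-1}$ times $\sum_{k=-\infty}^{\infty}(x+2k\pi)^{-2}$. This series converges for any $x\in(0,2\pi)$ — it is dominated by a constant multiple of $\sum_{k\ge 1}k^{-2}$ — so $\mathcal{K}_0(x) = O(K^{-1})$, with the implied constant depending on $x$ (blowing up near the endpoints $0$ and $2\pi$, which is harmless since Theorem \ref{thm_cos} only needs continuity points in the open interval). One should be slightly careful that for small $K$ a few central terms might have $|Kx+2kK\pi|<2\pi$; this is dealt with by noting it affects only finitely many $k$ for each $x$ and one may take $K$ large enough (or bound those finitely many terms crudely by $\|\sigma\|_{L^1(-1,1)}$, which is finite). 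Then, since by \eqref{eq:K02K1} we have $\mathcal{K}_1(x) = \int_{\pi}^{x}\mathcal{K}_0(t)\,dt$ and the $O(K^{-1})$ bound on $\mathcal{K}_0$ is integrable on any compact subinterval of $(0,2\pi)$, it follows immediately that $\mathcal{K}_1(x) = O(K^{-1})$ on $(0,2\pi)$ as well.

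Finally, plugging $\mathcal{K}_1 \sim O(K^{-1})$ into the exact error formula \eqref{eq:error_filtered_cos_cdf} — a finite sum of $M$ terms each of the form $\frac{p_m}{2\pi}\mathcal{K}_1(\cdot)$ with arguments lying in $(0,2\pi)$ — yields $|F_X^{\sigma}(x) - F_X(x)| \sim O(K^{-1})$ at every continuity point $x\in(0,\pi)$, which is the claim. I expect the main obstacle to be the careful handling of the convergence and uniformity of the series $\sum_k (x+2k\pi)^{-2}$ together with the endpoint behavior: one must argue that for each fixed continuity point the constant is finite, and be explicit that the $O$-constant is $x$-dependent (as is already the case in \eqref{bound:K0}). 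Everything else is a direct substitution into machinery already built in the excerpt.
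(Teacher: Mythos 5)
Your proposal is correct and follows essentially the same route as the paper: Poisson summation \eqref{eq:poisson_k0}, the decay bound of Lemma \ref{theorem:conv_rate_transform_of_lanczos} applied termwise (with the two central terms $k=0,-1$ handled by taking $K$ large enough, exactly as the paper's condition $K>\max(2\pi/x,\,2\pi/(2\pi-x))$), summation of the resulting $O(K^{-1})$ series, integration from $\pi$ to $x$ for $\mathcal{K}_1$, and substitution into \eqref{eq:error_filtered_cos_cdf}. The only differences are presentational (the paper makes the constants explicit), so no gap to report.
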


\begin{proof}
By applying the Poisson summation formula \ref{eq:poisson_k0}, we obtain
	\begin{align*}
		\mathcal{K}_0(x) 
		=    K \sum_{k=-\infty}^{-2} \hat{s} (Kx + 2kK\pi) 
		+  K \sum_{k=1}^{\infty} \hat{s} (Kx + 2kK\pi)  
		+  K \bigg[   \hat{s} (Kx) +  \hat{s} (Kx - 2K\pi) \bigg],
	\end{align*}
where $\hat{s}$ is defined in Equation \ref{eq:s_hat}.

  For $k\leq -2$ or $k \geq 1$,  we have $ |Kx + 2kK\pi| \geq 2\pi$, which implies  $ |\hat{s}(Kx + 2kK\pi )| \leq \frac{38\pi}{3|Kx + 2kK\pi|^2} $ in light of the Lemma \ref{theorem:conv_rate_transform_of_lanczos}. Hence, both $K \sum_{k=-\infty}^{-2} \hat{s} (Kx + 2kK\pi)$ and $K \sum_{k=1}^{\infty} \hat{s} (Kx + 2kK\pi) $ are bounded by $\frac{19}{6\pi K} \sum_{k=1}^{\infty}  \frac{1}{k^2} $. 
  
  To bound  $K \bigg[   \hat{s} (Kx) +  \hat{s} (Kx - 2K\pi) \bigg]$, note that for $k = -1$ and $k = 0$, $\hat{s} (Kx)$ and $\hat{s} (Kx - 2K\pi)$ are be bounded by $\frac{38\pi}{3K^2x^2}$ and $\frac{38\pi}{3 (Kx-2K\pi)^2}$ respectively, for $K$ large enough such that $\abs{Kx}$ and $\abs{Kx-2K\pi}$ are greater than $2\pi$. 
  
 Thus, for  $K > \max(\frac{2\pi}{x}, \frac{2\pi}{2\pi - x})$,  
  \[
  \abs{\mathcal{K}_0(x)}  \leq \frac{19}{3\pi K} \sum_{k=1}^{\infty}  \frac{1}{k^2}  + \frac{38\pi}{3Kx^2} + \frac{38\pi}{3K (x-2\pi)^2} \leq \frac{38}{3\pi K}+ \frac{38\pi}{3Kx^2} + \frac{38\pi}{3K(x-2\pi)^2}. 
  \]
  Therefore, 
  	\begin{align}\label{ineq:K1_bound_lanczos}
  	\abs{\mathcal{K}_1(x)} &= \abs{\int_{\pi}^x \mathcal{K}_0(t) dt} \nonumber \\
  	& \leq \frac{38}{3K\pi}\abs{x-\pi}  + \frac{38}{3K\pi} \big( \frac{1}{x} +  \frac{1}{2\pi - x} \big) 
  \end{align} 

\end{proof}

\subsection{The raised cosine filter}\label{sec:raised_cosine}
The raised cosine filter is a second order filter ($p=2$). In Theorem \ref{theorem:conv_rate_raised_cosine}  we demonstrate that the filtered Fourier series converges as $O(K^{-2})$ for the raised cosine filter. 

\begin{theorem}\label{theorem:conv_rate_raised_cosine}
For \(x \in (0, 2\pi)\),  \(\mathcal{K}_0 \sim O(K^{-2})\) and \(\mathcal{K}_1 \sim O(K^{-2})\) for the raised cosine filter, which implies \(|F_X^{\sigma}(x) - F_X(x)| \sim O(K^{-2})\). 
\end{theorem}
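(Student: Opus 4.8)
The plan is to follow the template of the proof of Theorem~\ref{theorem:conv_rate_lanczos}: estimate $\mathcal{K}_0$ through the Poisson summation formula~\ref{eq:poisson_k0}, obtain $\mathcal{K}_1$ by integration, and finally invoke Theorem~\ref{thm_cos}. The one new ingredient is a sharp estimate of the transform $\hat{s}$ from~\ref{eq:s_hat} for the raised cosine filter, which turns out to decay one order faster than the generic $O(|y|^{-p})$ bound used in \cite{Vandeven1991}.

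First I would evaluate $\hat{s}$ exactly. With $\sigma(\eta)=\tfrac{1}{2}(1+\cos\pi\eta)$ on $[-1,1]$ and using that $\sigma$ is even,
\[
\hat{s}(y)=\int_{-1}^{1}\tfrac{1}{2}(1+\cos\pi z)\cos(yz)\,dz=\frac{\sin y}{y}+\frac{y\sin y}{\pi^{2}-y^{2}}=\frac{\pi^{2}\sin y}{y(\pi^{2}-y^{2})},
\]
where the second equality is an elementary integration and the third an algebraic simplification; the apparent singularities at $y=0,\pm\pi$ are removable. (Equivalently, integrating by parts twice kills the boundary terms because $\sigma(\pm1)=\sigma'(\pm1)=0$, leaving $\hat{s}(y)=-y^{-2}\int_{-1}^{1}\sigma''(z)e^{iyz}\,dz$; since $\sigma''$ is a single cosine, smooth on $(-1,1)$ with only an endpoint jump, its transform over $[-1,1]$ decays like $|y|^{-1}$, which is precisely the source of the extra power.) The key consequence is that for $|y|\ge 2\pi$ one has $|\pi^{2}-y^{2}|\ge\tfrac{3}{4}y^{2}$, hence $|\hat{s}(y)|\le \tfrac{4\pi^{2}}{3}\,|y|^{-3}$.

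Next I would substitute this into $\mathcal{K}_0(x)=K\sum_{k=-\infty}^{\infty}\hat{s}(Kx+2kK\pi)$ and split off the terms $k=0$ and $k=-1$, exactly as in the Lanczos case. For $x\in(0,2\pi)$ and $K>\max\!\big(\tfrac{2\pi}{x},\tfrac{2\pi}{2\pi-x}\big)$ the arguments $Kx$ and $Kx-2K\pi$ have modulus at least $2\pi$, so those two terms contribute at most $\tfrac{4\pi^{2}}{3K^{2}}\big(x^{-3}+(2\pi-x)^{-3}\big)$. For the remaining indices ($k\ge 1$ or $k\le -2$) one has $|Kx+2kK\pi|\ge 2K\pi$, with a bound proportional to $K|k|$, so their total is bounded by a constant times $K^{-2}\sum_{k\ge1}k^{-3}$. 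Adding these, $|\mathcal{K}_0(x)|\le C K^{-2}\big(1+x^{-3}+(2\pi-x)^{-3}\big)$ for $K$ large enough, i.e. $\mathcal{K}_0\sim O(K^{-2})$ on $(0,2\pi)$. Then, using $\mathcal{K}_1(\pi)=0$ (immediate from~\ref{eq:K1}) together with $\mathcal{K}_1'=\mathcal{K}_0$, I would write $\mathcal{K}_1(x)=\int_{\pi}^{x}\mathcal{K}_0(t)\,dt$ as in~\ref{eq:K02K1} and integrate the pointwise bound to get $|\mathcal{K}_1(x)|\le C K^{-2}\big(|x-\pi|+x^{-2}+(2\pi-x)^{-2}\big)$, hence $\mathcal{K}_1\sim O(K^{-2})$. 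Finally, Equation~\ref{eq:error_filtered_cos_cdf} of Theorem~\ref{thm_cos} expresses $F_X^{\sigma}(x)-F_X(x)$ as a finite weighted sum of values of $\mathcal{K}_1$ at points of $(0,2\pi)$ which, for a fixed continuity point $x$, stay bounded away from $0$ and $2\pi$; therefore the CDF error inherits the rate $O(K^{-2})$.

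The genuinely new, and only delicate, step is the first one: showing $\hat{s}(y)=O(|y|^{-3})$ rather than $O(|y|^{-2})$. The estimate used in \cite{Vandeven1991} bounds $\int_{-1}^{1}\sigma''(z)e^{iyz}\,dz$ merely by $\|\sigma''\|_{L^{2}(-1,1)}$, which gives $\hat{s}=O(|y|^{-2})$ and hence the previously reported rate $O(K^{1-p})=O(K^{-1})$; it is the explicit form of the raised cosine filter that yields the extra decay and upgrades the conclusion to $O(K^{-2})$. Once that estimate is secured, the rest is a verbatim repetition of the argument for Theorem~\ref{theorem:conv_rate_lanczos}.
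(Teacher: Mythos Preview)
Your proposal is correct and follows essentially the same route as the paper: you compute $\hat{s}(y)=\dfrac{\pi^{2}\sin y}{y(\pi^{2}-y^{2})}$ (identical to the paper's $-\pi^{2}\sin y/(y^{3}-\pi^{2}y)$), extract the bound $|\hat{s}(y)|\le\tfrac{4\pi^{2}}{3}|y|^{-3}$ for $|y|\ge 2\pi$, split the Poisson sum at $k=0,-1$, and integrate from $\pi$ to $x$ to pass from $\mathcal{K}_0$ to $\mathcal{K}_1$. The only addition beyond the paper is your explanatory remark on why the extra power appears (the vanishing of $\sigma,\sigma'$ at $\pm1$ via integration by parts), which is a nice gloss but not a different argument.
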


\begin{proof}
Again, by applying the Poisson summation formula \ref{eq:poisson_k0}, we obtain
\begin{align*}
	\mathcal{K}_0(x) 
	=    K \sum_{k=-\infty}^{-2} \hat{s} (Kx + 2kK\pi) 
	+  K \sum_{k=1}^{\infty} \hat{s} (Kx + 2kK\pi)  
	+  K \bigg[   \hat{s} (Kx) +  \hat{s} (Kx - 2K\pi) \bigg]
\end{align*}
	with $\hat{s}(y) =   \frac{-\pi^2\sin(y)}{y^3 - y \pi^2}$.
	
	
  For $k\leq -2$ or $k \geq 1$,  we have $ |Kx + 2kK\pi| \geq 2\pi$, which implies $ |\hat{s}(Kx + 2kK\pi )| \leq \frac{4\pi^2}{3|Kx + 2kK\pi|^3} $. Hence, both $K \sum_{k=-\infty}^{-2} \hat{s} (Kx + 2kK\pi)$ and $K \sum_{k=1}^{\infty} \hat{s} (Kx + 2kK\pi) $ are bounded by $\frac{1}{6\pi K^2} \sum_{k=1}^{\infty}  \frac{1}{k^3} $.
 We denote  $ \sum_{k=1}^{\infty}\frac{1}{k^3} $ by the Ap\'ery's constant $\zeta(3)$, which is less than 1.21.  
	
%
%
%

 To bound  $K \bigg[   \hat{s} (Kx) +  \hat{s} (Kx - 2K\pi) \bigg]$, note that for $k = -1$ and $k = 0$,  $\hat{s} (Kx)$ and $\hat{s} (Kx - 2K\pi)$ are be bounded by $ \frac{4\pi^2}{3K^3 x^3} $ and $\frac{4\pi^2}{3K^3(2\pi - x )^3}$ respectively, for $K$ large enough such that $\abs{Kx}$ and $\abs{Kx-2K\pi}$ are greater than $2\pi$. 

Thus, for  $K > \max(\frac{2\pi}{x}, \frac{2\pi}{2\pi - x})$,  
\[
\abs{\mathcal{K}_0(x)}  \leq \frac{1}{ 3\pi K^2} \zeta(3)   + \frac{4\pi^2}{3K^2 x^3} + \frac{4\pi^2}{3K^2(2\pi - x )^3} .
\]
Therefore, 
\begin{align}\label{ineq:K1_bound_raised_cosine}
	\abs{\mathcal{K}_1(x)} &= \abs{\int_{\pi}^x \mathcal{K}_0(t) dt} \nonumber \\
	& \leq \frac{1}{ 3\pi K^2} \zeta(3) \abs{x - \pi}    + \frac{2\pi^2}{3K^2} \big(\frac{1}{x^2} + \frac{1}{(2\pi - x)^2} \big). 
\end{align}

\end{proof}

\subsection{The sharpened raised cosine filter}\label{sec:sharpened_raised_cosine}
The sharpened raised cosine filter is an eighth order filter ($p=8$). Using the same approach as for the raised cosine filter, in Theorem \ref{theorem:conv_rate_sharpened_raised_cosine} we show that the convergence rate of the sharpened raised cosine filter is $O(K^{-8})$. That is also one order faster than previously documented in \cite{Vandeven1991} and \cite{GottbliebAndShu1991}.

\begin{theorem}\label{theorem:conv_rate_sharpened_raised_cosine}
 For \(x \in (0, 2\pi)\), \(\mathcal{K}_0 \sim O(K^{-8})\) and  \(\mathcal{K}_1 \sim O(K^{-8})\) , which implies \(|F_X^{\sigma}(x) - F_X(x)| \sim O(K^{-8})\). 
\end{theorem}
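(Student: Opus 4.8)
The plan is to reuse, almost verbatim, the mechanism of Theorem \ref{theorem:conv_rate_raised_cosine}: the only genuinely new work is a decay estimate of the form $|\hat s(y)|\le C_0\,|y|^{-9}$ for $|y|\ge 2\pi$, where $\hat s(y)=\int_{-1}^1\sigma(x)e^{iyx}\,dx$ and $\sigma$ is the sharpened raised cosine filter. This is one power faster than the generic $O(|y|^{-8})$ that the order $p=8$ would suggest, and it is exactly what pushes $\mathcal K_0$ (hence $\mathcal K_1$, hence the CDF error) from $O(K^{1-p})=O(K^{-7})$ up to $O(K^{-8})$. Once the estimate on $\hat s$ is in place, I would feed it into the Poisson summation formula \ref{eq:poisson_k0}, split off the two central terms exactly as in the raised cosine proof, integrate to pass from $\mathcal K_0$ to $\mathcal K_1$, and invoke the error identity \ref{eq:error_filtered_cos_cdf} of Theorem \ref{thm_cos}.

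For the decay of $\hat s$, I would write $\sigma=\sigma_r^4\bigl(35-84\sigma_r+70\sigma_r^2-20\sigma_r^3\bigr)=S_3\circ\sigma_r$ with $\sigma_r(\eta)=\tfrac12(1+\cos\pi\eta)$ and $S_3(s)=35s^4-84s^5+70s^6-20s^7$, and record the two local facts that make the filter genuinely order $8$. Near $\eta=0$: $\sigma_r-1=O(\eta^2)$ and $S_3(s)-1=O((s-1)^4)$ (since $S_3'(1)=S_3''(1)=S_3'''(1)=0$, $S_3^{(4)}(1)=-840$), so $\sigma(\eta)-1=O(\eta^8)$. Near $\eta=1$: $\sigma_r=O((\eta-1)^2)$ and $S_3(s)=O(s^4)$, so $\sigma(\eta)=\tfrac{35\pi^8}{256}(\eta-1)^8+O((\eta-1)^{10})$, and by evenness the same holds at $\eta=-1$. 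Hence the zero-extension of $\sigma$ lies in $C^7(\mathbb R)$ with $\sigma^{(l)}(\pm1)=0$ for $0\le l\le 7$, while $\sigma$ is real-analytic on a neighbourhood of $[-1,1]$, so $\sigma^{(8)}$ and $\sigma^{(9)}$ are bounded on $(-1,1)$. Integrating by parts eight times in $\hat s(y)$ kills every boundary term (each involves $\sigma^{(l)}(\pm1)$ with $l\le 7$), giving $\hat s(y)=(iy)^{-8}\int_{-1}^1\sigma^{(8)}(x)e^{iyx}\,dx$; one further integration by parts on $(-1,1)$ yields $\int_{-1}^1\sigma^{(8)}(x)e^{iyx}\,dx=(iy)^{-1}\bigl(\sigma^{(8)}(1)e^{iy}-\sigma^{(8)}(-1)e^{-iy}\bigr)-(iy)^{-1}\int_{-1}^1\sigma^{(9)}(x)e^{iyx}\,dx$, which is $O(|y|^{-1})$ with explicit constant $C_0\le 2\lVert\sigma^{(8)}\rVert_{L^\infty(-1,1)}+\lVert\sigma^{(9)}\rVert_{L^1(-1,1)}$. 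This gives $|\hat s(y)|\le C_0|y|^{-9}$ for all $y\ne 0$. One can alternatively expand $\sigma$ into the cosine modes $\cos(j\pi\eta)$, $0\le j\le 7$, and obtain a closed form $\hat s(y)=\sin y\,P(y)\big/\bigl(y\prod_{j=1}^{7}(y^2-j^2\pi^2)\bigr)$ where the order-$8$ cancellations force $\deg P\le 6$; but the apparent poles at $y=\pm j\pi$ (some inside $|y|\ge 2\pi$) make bounding this form fiddlier, so I would keep the integration-by-parts estimate as the workhorse.

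With $|\hat s(y)|\le C_0|y|^{-9}$ on $|y|\ge 2\pi$ in hand, the rest is a transcription of the raised cosine proof. By \ref{eq:poisson_k0}, $\mathcal K_0(x)=K\sum_{k=-\infty}^{-2}\hat s(Kx+2kK\pi)+K\sum_{k=1}^{\infty}\hat s(Kx+2kK\pi)+K\bigl[\hat s(Kx)+\hat s(Kx-2K\pi)\bigr]$; for $K>\max\bigl(\tfrac{2\pi}{x},\tfrac{2\pi}{2\pi-x}\bigr)$ every argument has modulus $\ge 2\pi$ (the tails in fact have modulus $\ge 2\pi K|k|$), so the two tails are each bounded by $\tfrac{C_0}{(2\pi)^9}K^{-8}\sum_{k\ge1}k^{-9}$ and the two central terms by $C_0K^{-8}\bigl(x^{-9}+(2\pi-x)^{-9}\bigr)$, giving $\mathcal K_0(x)\sim O(K^{-8})$ on $(0,2\pi)$. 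Then $\mathcal K_1(x)=\int_\pi^x\mathcal K_0(t)\,dt$ (as in \ref{eq:K02K1}), and integrating the bound on $\mathcal K_0$ gives $|\mathcal K_1(x)|\lesssim K^{-8}\bigl(|x-\pi|+x^{-8}+(2\pi-x)^{-8}\bigr)$, so $\mathcal K_1\sim O(K^{-8})$ on $(0,2\pi)$ as well. Substituting $\mathcal K_1\sim O(K^{-8})$ into the error identity \ref{eq:error_filtered_cos_cdf} of Theorem \ref{thm_cos} yields $|F_X^{\sigma}(x)-F_X(x)|\sim O(K^{-8})$.

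The main obstacle is the sharp estimate on $\hat s$: one must verify carefully that $\sigma$ vanishes to exactly order $8$ at $\pm1$ (so that all eight boundary terms in the iterated integration by parts really drop out) and that $\sigma^{(8)},\sigma^{(9)}$ are genuinely bounded on $(-1,1)$ — which uses analyticity of $\sigma$ on the closed interval, not merely the $C^7$ regularity of its zero-extension. The composition structure $\sigma=S_3\circ\sigma_r$ makes the order-$8$ vanishing transparent, but checking it (equivalently, tracking the constant $C_0$ via Faà di Bruno or a direct Taylor expansion) is where the bookkeeping lies. Everything downstream — the Poisson split into near and far terms and the passage from $\mathcal K_0$ to $\mathcal K_1$ — is routine and structurally identical to the raised cosine case, only with the exponent $3$ replaced by $9$.
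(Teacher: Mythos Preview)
Your proposal is correct, and the downstream Poisson-summation argument is indeed identical to the paper's. The difference lies entirely in how the decay $|\hat s(y)|\le C_0|y|^{-9}$ is obtained. The paper computes $\hat s$ in closed form,
\[
\hat s(y)=\frac{11025\,\pi^8\sin y}{\,y^9-84\pi^2y^7+1974\pi^4y^5-12916\pi^6y^3+11025\pi^8y\,},
\]
then completes squares in the denominator (with $z=y^2$) to show it dominates $\tfrac{1}{484}|y|^9$ once $|y|\ge 6\pi$, yielding the explicit constant $|\hat s(y)|\le 5336100\,\pi^8|y|^{-9}$. From there the split into far/near terms and the passage $\mathcal K_0\to\mathcal K_1$ proceed exactly as you describe, with the threshold $K>\max(6\pi/x,\,6\pi/(2\pi-x))$ in place of your $2\pi$.

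Your integration-by-parts route is the more conceptual of the two: by isolating the structural facts (eighth-order vanishing of $\sigma$ at $\pm1$, boundedness of $\sigma^{(8)},\sigma^{(9)}$ on the closed interval via analyticity of $S_3\circ\sigma_r$) you explain \emph{why} the extra power appears, and your bound holds for all $y\ne0$ rather than only past a threshold. This is precisely the closed-form approach you flag as ``fiddlier'' and set aside; the paper carries it out, trading transparency for fully explicit numerical constants in the final $\mathcal K_1$ bound. Either way the remainder of the argument is, as you say, a transcription of the raised-cosine case with exponent $3$ replaced by $9$.
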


\begin{proof}
In the case of the sharpened raised cosine filter,
	\[
	\hat{s}(y) =  \int_{-1}^{1} \sigma(x) e^{iyx}dx =  \frac{ 11025 \pi^8 \sin(y)}{y^9 - 84 \pi^2 y^7 + 1974 \pi^4 y^5 - 12916 \pi^6 y^3 + 11025 \pi^8 y}.
	\]
	
Note that
\begin{align*}
&y^9 - 84 \pi^2 y^7 + 1974 \pi^4 y^5 - 12916 \pi^6 y^3 + 11025 \pi^8 y \\
&=  y [z^2(\frac{21}{22}z - 44\pi^2)^2 + 38\pi^4 (z - \frac{3229}{19}\pi^2)^2 + \frac{1}{484} z^4 + \frac{21}{242} z^4 + \pi^8 (11025 - 2\frac{3229^2}{19})],
\end{align*}

with $z = y^2$.
\\
If $\abs{y} \geq 6\pi$, we have $\frac{21}{242} z^4 + \pi^8 (11025 - 2\frac{3229^2}{19}) \geq  0$, which implies that
 \[ \abs{y^9 - 84 \pi^2 y^7 + 1974 \pi^4 y^5 - 12916 \pi^6 y^3 + 11025 \pi^8 y } \leq \abs{\frac{1}{484} y^9}, \]
 and \[
 \abs{\hat{s}(y)} \leq 5336100  \pi^8 y^{-9}.
 \]

Without any loss we could assume that $K \geq 3$.  For $k\leq -2$ or $k \geq 1$,  we have $ |Kx + 2kK\pi| \geq 6\pi$, which implies $ |\hat{s}(Kx + 2kK\pi )| \leq  5336100  \pi^8 (Kx + 2kK\pi)^{-9}  $. Hence, both $K \sum_{k=-\infty}^{-2} \hat{s} (Kx + 2kK\pi)$ and $K \sum_{k=1}^{\infty} \hat{s} (Kx + 2kK\pi) $ are bounded by $\frac{1334025}{\pi 2^7 K^8} \sum_{k=1}^{\infty}  \frac{1}{k^9} $.
We denote  $ \sum_{k=1}^{\infty}\frac{1}{k^9} $ by  $\zeta(9)$.

 To bound  $K \bigg[   \hat{s} (Kx) +  \hat{s} (Kx - 2K\pi) \bigg]$, note that for $k = -1$ and $k = 0$,  $\hat{s} (Kx)$ and $\hat{s} (Kx - 2K\pi)$ are be bounded by $ 5336100  \pi^8 (Kx)^{-9}$ and $ 5336100  \pi^8 (K (2\pi - x))^{-9}$ respectively, for $K$ large enough such that $\abs{Kx}$ and $\abs{Kx-2K\pi}$ are greater than $6\pi$. 
 
  Finally, for  $K > \max(\frac{6\pi}{x}, \frac{6\pi}{2\pi - x})$,  
 \[
 \abs{\mathcal{K}_0(x)}  \leq \frac{1334025}{\pi 2^7 K^8} \zeta(9)   + \frac{5336100 \pi^8}{K^8x^9} + \frac{5336100 \pi^8}{K^8 (x-2\pi)^9} .
 \]
 Finally, 
 \begin{align}\label{ineq:K1_bound_sharpened_raised_cosine}
 	\abs{\mathcal{K}_1(x)} &= \abs{\int_{\pi}^x \mathcal{K}_0(t) dt} \nonumber \\
 	& \leq \frac{1334025}{\pi 2^7 K^8} \zeta(9) \abs{x - \pi}    + \frac{5336100 \pi^8}{8K^8} \big(\frac{1}{x^8} + \frac{1}{(2\pi - x)^8} \big). 
 \end{align}
 
%
\end{proof}

\subsection{The second order exponential filter}\label{sec:exponential_filter}
In the case of the second order exponential filter $\sigma(x) = e^{-\alpha x^2}$, 
\[
\hat{s}(x) = 2 \int_0^1 e^{-\alpha t^2}\cos(tx)dt =  \frac{2}{x}e^{-\alpha}\sin(x)  -\frac{4\alpha e^{-\alpha} \cos(x)}{x^2} + \frac{4\alpha}{x^2} \int_0^1 \cos(tx) e^{-\alpha t^2}(1 - 2\alpha t^2 )dt.
\]

\begin{lemma}\label{lemma:third_order_term_exp}
 For $\alpha > 0$ and $|x|\leq 2\pi $, we have the following inequality  
	\begin{align*}
		\abs{\int_0^1 \cos(tx) e^{-\alpha t^2}(1 - 2\alpha t^2 )dt} \leq \frac{1 + 4e^{-1}}{\abs{x}}.
	\end{align*} 
\end{lemma}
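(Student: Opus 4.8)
The plan is to bring out a factor $\tfrac1x$ by a single integration by parts, reducing the claim to a scalar inequality in $\alpha$ alone, and then to prove that inequality carefully, since a crude triangle estimate loses a cancellation that one needs. Set $g(t):=e^{-\alpha t^2}\bigl(1-2\alpha t^2\bigr)$, so the integrand is $\cos(tx)\,g(t)$. Writing $\cos(tx)=\tfrac1x\,\tfrac{d}{dt}\sin(tx)$ and integrating by parts on $[0,1]$ (the boundary term at $t=0$ vanishes since $\sin 0=0$) gives, for $x\neq 0$,
\[
\int_0^1\cos(tx)\,g(t)\,dt=\frac1x\Bigl(\sin(x)\,g(1)-\int_0^1\sin(tx)\,g'(t)\,dt\Bigr),
\]
and hence
\[
\Bigl|\int_0^1\cos(tx)e^{-\alpha t^2}(1-2\alpha t^2)\,dt\Bigr|\le\frac1{|x|}\Bigl(|g(1)|+\int_0^1|g'(t)|\,dt\Bigr).
\]
The bracket on the right is independent of $x$, so in fact the estimate holds for every $x\neq 0$. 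It thus suffices to prove that $|g(1)|+\int_0^1|g'(t)|\,dt\le 1+4e^{-1}$ for every $\alpha>0$.

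For this I would compute the two ingredients explicitly. Differentiating gives $g'(t)=2\alpha t\,(2\alpha t^2-3)\,e^{-\alpha t^2}$, so $|g(1)|=e^{-\alpha}|2\alpha-1|$, and the substitution $s=\alpha t^2$ turns the total variation into $\int_0^1|g'(t)|\,dt=\int_0^\alpha e^{-s}|2s-3|\,ds$. Using the antiderivative identity $\tfrac{d}{ds}\bigl[e^{-s}(2s-1)\bigr]=e^{-s}(3-2s)$, the last integral is evaluated in closed form by splitting at $s=3/2$; distinguishing the cases $\alpha\le\tfrac12$, $\tfrac12<\alpha\le\tfrac32$, and $\alpha>\tfrac32$, the $e^{-\alpha}(2\alpha-1)$ terms from $|g(1)|$ and from (part of) the total variation cancel, and one finds that $|g(1)|+\int_0^1|g'(t)|\,dt$ equals $1$ when $\alpha\le\tfrac12$, equals $1+2e^{-\alpha}(2\alpha-1)\le 1+4e^{-3/2}$ when $\tfrac12<\alpha\le\tfrac32$, and equals exactly $1+4e^{-3/2}$ when $\alpha>\tfrac32$. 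Since $e^{-3/2}<e^{-1}$, the bound never exceeds $1+4e^{-1}$, which proves the lemma (in fact with the sharper constant $1+4e^{-3/2}$).

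The one genuine obstacle is this last scalar bound. Estimating $|g(1)|\le 1$ and $\int_0^1|g'(t)|\,dt\le\int_0^\infty e^{-s}|2s-3|\,ds=1+4e^{-3/2}$ separately yields only $2+4e^{-3/2}\approx 2.89>1+4e^{-1}\approx 2.47$, and splitting $1-2\alpha t^2$ into $1$ minus $2\alpha t^2$ and bounding the resulting cosine integrals independently is similarly too lossy. One has to keep track of the sign of $\int_0^1 g'$ --- equivalently, use that $g$ is monotone decreasing on $[0,1]$ unless $\alpha>3/2$, in which case its only turning point is $t=\sqrt{3/(2\alpha)}$, where $g=-2e^{-3/2}$ --- so that the endpoint value $g(1)$ is absorbed into the matching piece of the total variation. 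The rest is elementary calculus.
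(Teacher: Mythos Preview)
Your proof is correct and in fact yields the sharper constant $1+4e^{-3/2}$ in place of the paper's $1+4e^{-1}$; your argument also works for every $x\neq 0$, so the hypothesis on $|x|$ (which, given how the lemma is applied downstream, is evidently meant to read $|x|\ge 2\pi$) is never used. Both arguments begin with the same integration by parts to extract the factor $1/x$, but then diverge: you keep $g(t)=e^{-\alpha t^2}(1-2\alpha t^2)$ intact, compute its total variation exactly via the substitution $s=\alpha t^2$, and track the sign change of $g'$ at $t=\sqrt{3/(2\alpha)}$ to exploit the cancellation between $|g(1)|$ and part of $\int_0^1|g'|$. The paper instead splits $g$ as $e^{-\alpha t^2}-2\alpha\,t^2 e^{-\alpha t^2}$, integrates each piece by parts separately, and bounds the resulting sine integrals using the (piecewise) monotonicity of $e^{-\alpha t^2}$ and $t^2 e^{-\alpha t^2}$. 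Your closing remark that this split is ``similarly too lossy'' is slightly off: the paper's decomposition does reach $1+4e^{-1}$ on the nose, because in each half the boundary term $e^{-\alpha}\sin(x)/x$ cancels against a matching piece of the total variation, just as in your own computation. What the split does forfeit is the further cancellation \emph{between} the two halves that your unified treatment captures, which is precisely why you arrive at the smaller $1+4e^{-3/2}$.
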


\begin{proof}
Note that integration by parts yields
\begin{align*}
\int_0^1 \cos(tx) e^{-\alpha t^2} dt 
= \frac{1}{x} e^{-\alpha} \sin(x)- \frac{1}{x} \int_0^1 \sin(tx) d[e^{-\alpha t^2} ].
\end{align*}
Since $e^{-\alpha t^2}$ is a monotonically decreasing function, 
\[
\abs{\frac{1}{x}\int_0^1 \sin(tx) d[e^{-\alpha t^2} ]} \leq \abs{\frac{1}{x}}\cdot \abs{\int_0^1 d[e^{-\alpha t^2} ]} = \frac{1 - e^{-\alpha}}{\abs{x}}.
\]
Therefore,
\[
\abs{\int_0^1 \cos(tx) e^{-\alpha t^2} dt} \leq  \frac{e^{-\alpha} }{\abs{x}}  + \frac{1 - e^{-\alpha}}{\abs{x}}  = \frac{1}{\abs{x}},
\]
Similarly, integration by parts yields
\begin{align*}
	\int_0^1 \cos(tx) t^2 e^{-\alpha t^2} dt 
	= \frac{1}{x}e^{-\alpha} \sin(x) - \frac{1}{x} \int_0^1 \sin(tx) d[t^2 e^{- \alpha t^2} ].
\end{align*}
Further,
\begin{align*}
 \abs{\int_0^1 \sin(tx) d[t^2 e^{- \alpha t^2} ]} &\leq \abs{\int_0^{1/\sqrt{\alpha}} \sin(tx) d[t^2 e^{- \alpha t^2} ] } +  \abs{\int_{\sqrt{\alpha}}^1 \sin(tx) d[t^2 e^{- \alpha t^2} ] } \\
 & \leq \abs{\int_0^{1/\sqrt{\alpha}}  d[t^2 e^{- \alpha t^2} ] } +  \abs{\int_{\sqrt{\alpha}}^1  d[t^2 e^{- \alpha t^2} ] } \\
 &=  (2 \frac{e^{-1}}{a} - e^{-a})
\end{align*}
where the second inequality is due to that $t^2 e^{- \alpha t^2}$ is monotonic on $[0,1/\sqrt{\alpha}  ]$ and $[1/\sqrt{\alpha}, 1]$. 

Therefore, 
\[
\abs{\int_0^1 \cos(tx) t^2 e^{-\alpha t^2} dt} \leq \frac{e^{-\alpha} }{\abs{x}} + \frac{1}{\abs{x}} (2 \frac{e^{-1}}{a} - e^{-a}) = \frac{2e^{-1}}{a\abs{x}},
\]

In total,
\[
\abs{\int_0^1 \cos(tx) e^{-\alpha t^2}(1 - 2\alpha t^2 )dt} \leq \abs{\int_0^1 \cos(tx) e^{-\alpha t^2} dt} + 2\alpha \abs{\int_0^1 \cos(tx) t^2 e^{-\alpha t^2} dt} =\frac{1 + 4e^{-1}}{\abs{x}} 
\]

\end{proof}

\begin{theorem}\label{theorem:conv_rate_second_order_exp}
	For  \(x \in (0, 2\pi)\) and $\alpha>0$, the bounds of $\mathcal{K}_0$ and $\mathcal{K}_1$ with the second order exponential filter depends are as follows:
	\begin{align*}
	\abs{\mathcal{K}_0(x)} &\leq 
	( 4 \abs{\pi - x} e^{-\alpha} ) \left( \frac{1}{24} + \frac{1}{x(2\pi-x)} \right) + \nonumber \\
	&\frac{4\alpha e^{-\alpha}}{K} \left( \frac{1}{x^2} + \frac{1}{(x-2\pi)^2} + \frac{1}{12}\right) + \frac{1}{K^2} \left( \frac{20\alpha}{x^3} + \frac{20\alpha}{(2\pi - x)^3} + \frac{6.5\alpha}{\pi^3} \right),
	\end{align*}   
	and 
	\begin{align*}
		\abs{\mathcal{K}_1(x)} & \leq \frac{e^{-\alpha}}{12} (x-\pi)^2 +2 e^{-\alpha} (2\ln(\pi) - \ln(x) -\ln(2\pi -x)) \\
		&  + \frac{4\alpha e^{-\alpha}}{K} \left( \abs{\frac{1}{x} -\frac{1}{\pi} } + \abs{\frac{1}{x-2\pi} + \frac{1}{\pi}} + \frac{1}{12}\abs{x-\pi} \right) \nonumber \\
		&+ \frac{1}{K^2} \left( \abs{\frac{10\alpha}{x^2} -\frac{10\alpha}{\pi^2} } + \abs{\frac{10\alpha}{(2\pi -x)^2} -\frac{10\alpha}{\pi^2} }  + \frac{6.5\alpha}{\pi^3} \abs{x-\pi}\right)
	\end{align*}

\end{theorem}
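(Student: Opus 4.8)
The plan is to follow exactly the three–step template used for the raised–cosine and sharpened–raised–cosine filters (Theorems \ref{theorem:conv_rate_raised_cosine} and \ref{theorem:conv_rate_sharpened_raised_cosine}): insert the Poisson summation formula \ref{eq:poisson_k0} into $\mathcal{K}_0$, estimate the pieces of $\hat s$ separately, and then integrate the resulting pointwise bound via $\mathcal{K}_1(x)=\int_\pi^x\mathcal{K}_0(t)\,dt$ from \ref{eq:K02K1}. The one genuinely new feature, which is what gives the bound its unusual shape, is that $\sigma(x)=e^{-\alpha x^2}$ does \emph{not} vanish at $x=\pm1$, so $\hat s$ decays only like $1/|y|$ rather than like $|y|^{-p}$; the slowly–decaying part of $\hat s$ is precisely what produces the $K$–independent term in the bound for $\mathcal{K}_0$. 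The saving grace is that this slow part carries the factor $e^{-\alpha}$ (the machine epsilon in the filter's definition), so it is numerically negligible.

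Concretely, I start from the decomposition displayed just before Lemma \ref{lemma:third_order_term_exp},
\[
\hat s(y)=\frac{2e^{-\alpha}\sin y}{y}-\frac{4\alpha e^{-\alpha}\cos y}{y^{2}}+\frac{4\alpha}{y^{2}}\int_{0}^{1}\cos(ty)\,e^{-\alpha t^{2}}\bigl(1-2\alpha t^{2}\bigr)\,dt,
\]
and split $\mathcal{K}_0(x)=K\sum_{k}\hat s(Kx+2kK\pi)$ along these three pieces. For the last (genuine remainder) piece, the estimate $\bigl|\int_0^1\cos(ty)e^{-\alpha t^2}(1-2\alpha t^2)\,dt\bigr|\le(1+4e^{-1})/|y|$ — which the proof of Lemma \ref{lemma:third_order_term_exp} in fact establishes for every $y\neq0$ — gives $4\alpha(1+4e^{-1})|y|^{-3}$; splitting the lattice sum into the two central indices $k=0,-1$ (arguments $Kx$ and $Kx-2K\pi=-K(2\pi-x)$, producing the $x^{-3}$ and $(2\pi-x)^{-3}$ terms after multiplying by $K$) and the absolutely convergent tail $|k|\ge1$ (which, after pulling out $K$, contributes a $\zeta(3)$–type constant over $K^{2}$, absorbed into $6.5\alpha/\pi^{3}$) yields the $K^{-2}$ group, using the crude bound $4(1+4e^{-1})<20$. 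The $-4\alpha e^{-\alpha}\cos y/y^{2}$ piece is handled identically with $|\cos y|\le1$, giving the $K^{-1}$ group with coefficients $x^{-2}$, $(2\pi-x)^{-2}$ and the tail constant $2\zeta(2)/(2\pi)^{2}=\tfrac1{12}$, all carrying $e^{-\alpha}$.

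The delicate piece — and the main obstacle — is the leading term $2e^{-\alpha}\sin y/y$. Since it decays only like $1/|y|$, its lattice sum is merely conditionally convergent and cannot be split term by term against the central/tail decomposition; instead I evaluate it in closed form. Summing in the symmetric sense in which \ref{eq:poisson_k0} is to be read, and using $\sin(Kx+2kK\pi)=\sin(Kx)$ for integer $K$ together with the classical partial–fraction identity $\sum_{k\in\mathbb{Z}}(x+2k\pi)^{-1}=\tfrac12\cot(x/2)$, its contribution to $\mathcal{K}_0(x)$ equals exactly $e^{-\alpha}\sin(Kx)\cot(x/2)$, hence is bounded by $e^{-\alpha}\bigl|\cot(x/2)\bigr|$. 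It then remains to prove the elementary inequality $\bigl|\cot(x/2)\bigr|\le 4|\pi-x|\bigl(\tfrac1{24}+\tfrac1{x(2\pi-x)}\bigr)$ on $(0,2\pi)$, which is tight as $x\to0$ and $x\to2\pi$ (both sides behave like $2/x$, resp.\ $2/(2\pi-x)$), vanishes at $x=\pi$, and can be verified in between; this furnishes the $K$–independent first term of the $\mathcal{K}_0$ bound. Adding the three contributions gives the stated estimate for $\mathcal{K}_0$.

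Finally I obtain the $\mathcal{K}_1$ bound by integrating the $\mathcal{K}_0$ bound from $\pi$ to $x$ term by term. The weight $4|\pi-t|$ times $\tfrac1{24}$ integrates to $\tfrac{e^{-\alpha}}{12}(x-\pi)^{2}$, and times $1/(t(2\pi-t))$ it integrates — using $\tfrac{2(t-\pi)}{t(2\pi-t)}=-\tfrac{d}{dt}\bigl(\ln t+\ln(2\pi-t)\bigr)$ — to exactly $2e^{-\alpha}\bigl(2\ln\pi-\ln x-\ln(2\pi-x)\bigr)$; the $t^{-2}$ and $(2\pi-t)^{-2}$ terms integrate to the $\bigl|\tfrac1x-\tfrac1\pi\bigr|$ and $\bigl|\tfrac1{x-2\pi}+\tfrac1\pi\bigr|$ terms, the $t^{-3}$ and $(2\pi-t)^{-3}$ terms to the $\bigl|\tfrac{10\alpha}{x^{2}}-\tfrac{10\alpha}{\pi^{2}}\bigr|$–type terms, and the constants in $t$ to the $|x-\pi|$ multiples. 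Collecting these reproduces the claimed bound for $\mathcal{K}_1$. Apart from the leading $1/|y|$ term, where one must recognize conditional convergence, use the cotangent identity instead of a crude bound, and supply the sharp $\cot(x/2)$ estimate, everything is a bookkeeping–heavy but routine repetition of the raised–cosine argument.
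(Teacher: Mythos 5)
Your proposal is correct and follows essentially the same route as the paper: the same three--term decomposition of $\hat s$, the same use of Lemma \ref{lemma:third_order_term_exp} (whose proof indeed works for all $y\neq 0$), the same central/tail splitting of the lattice sum, and the same term-by-term integration from $\pi$ to $x$ for $\mathcal{K}_1$. The only deviation is the leading $\sin y/y$ piece, where the paper pairs the $k$ and $-(k+1)$ indices to get the absolutely convergent sum $2\abs{\pi-x}\sum_{k\ge 0}\frac{1}{(x+2k\pi)((2\pi-x)+2k\pi)}$ directly, while you evaluate the symmetric sum as $\tfrac12\cot(x/2)$ and then invoke the bound $\abs{\cot(x/2)}\le 4\abs{\pi-x}\left(\tfrac{1}{24}+\tfrac{1}{x(2\pi-x)}\right)$, which you leave partly to verification --- note that the paper's pairing argument is exactly the proof of that inequality, so the gap is only cosmetic.
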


\begin{proof}
As a result of  Lemma \ref{lemma:third_order_term_exp},
\begin{align*}
	\abs{ \sum_{k=-\infty}^{+\infty} \hat{s}(Kx + 2kK\pi) } &\leq \abs{ \sum_{k=-\infty}^{+\infty} \frac{2 e^{-\alpha} \sin(Kx + 2kK\pi)}{Kx + 2kK\pi} } + \abs{ \sum_{k=-\infty}^{+\infty} \frac{4\alpha e^{-\alpha} \cos(Kx + 2kK\pi)}{(Kx + 2kK\pi)^2} } \\
	&+ \abs{ 20\alpha \sum_{k=-\infty}^{+\infty} \frac{1}{(Kx + 2kK\pi)^3} }.
\end{align*}
	
Regarding the leading term, we have the following bound:
	\begin{align*}
	\abs{ \sum_{k=-\infty}^{+\infty} \frac{2 e^{-\alpha} \sin(Kx + 2kK\pi)}{Kx + 2kK\pi} } & \leq \frac{2 e^{-\alpha}}{K} \abs{ \sum_{k=-\infty}^{+\infty} \frac{1}{x + 2k\pi} }\\
	& = \frac{2 e^{-\alpha}}{K} \abs{ \sum_{k=0}^{+\infty} \frac{1}{x + 2k\pi} - \sum_{k=-\infty}^{-1} \frac{1}{ (2\pi - x) - 2 (k+1)\pi} } \\
	& = \frac{2 e^{-\alpha}}{K} \abs{ \sum_{k=0}^{+\infty} \frac{1}{x + 2k\pi} - \sum_{k=0}^{-\infty} \frac{1}{ (2\pi - x) + 2 k \pi} } \\
	& = \frac{2 e^{-\alpha}}{K} \abs{\sum_{k=0}^{+\infty} \left( \frac{1}{x + 2k\pi} - \frac{1}{ (2\pi - x) + 2 k \pi}  \right) } \\
	&=  \frac{4 \abs{\pi - x} e^{-\alpha}}{K} \sum_{k=0}^{+\infty} \frac{1}{(x + 2k\pi)((2\pi - x) + 2 k \pi)} \\
	&\leq \frac{4 \abs{\pi - x} e^{-\alpha}}{K}  \left( \sum_{k=1}^{+\infty} \frac{1}{4\pi^2 k^2} + \frac{1}{x(2\pi-x)} \right) \\
	& = \frac{4 \abs{\pi - x} e^{-\alpha}}{K} \left( \frac{1}{24} + \frac{1}{x(2\pi-x)} \right)
\end{align*}

The second and third terms can be easily bounded as follows:

\begin{align*}
	\abs{ \sum_{k=-\infty}^{+\infty} \frac{4\alpha e^{-\alpha} \cos(Kx + 2kK\pi)}{(Kx + 2kK\pi)^2} } &\leq \frac{4\alpha e^{-\alpha}}{K^2} \abs{ \sum_{k=-\infty}^{+\infty} \frac{1}{(x + 2k\pi)^2} }\\
 &\leq \frac{4\alpha e^{-\alpha}}{K^2} \left( \frac{1}{x^2} + \frac{1}{(x-2\pi)^2} + \frac{1}{12}\right),
\end{align*}

and 
\begin{align*}
	\abs{ \sum_{k=-\infty}^{+\infty}  \frac{20\alpha }{(Kx + 2kK\pi)^3}} \leq \frac{1}{K^3} \left( \frac{20\alpha}{x^3} + \frac{20\alpha}{(2\pi - x)^3} + \frac{6.5\alpha}{\pi^3} \right).   
\end{align*}

Thus 
\begin{align*}
\abs{\mathcal{K}_0(x)} &\leq 
 ( 4 \abs{\pi - x} e^{-\alpha} ) \left( \frac{1}{24} + \frac{1}{x(2\pi-x)} \right) + \nonumber \\
  &\frac{4\alpha e^{-\alpha}}{K} \left( \frac{1}{x^2} + \frac{1}{(x-2\pi)^2} + \frac{1}{12}\right) + \frac{1}{K^2} \left( \frac{20\alpha}{x^3} + \frac{20\alpha}{(2\pi - x)^3} + \frac{6.5\alpha}{\pi^3} \right),
\end{align*}
and

 \begin{align}\label{ineq:K1_bound_exp2}
	&\abs{\mathcal{K}_1(x)} = \abs{\int_{\pi}^x \mathcal{K}_0(t) dt} \nonumber \\
	& \leq \frac{e^{-\alpha}}{12} (x-\pi)^2 +2 e^{-\alpha} (2\ln(\pi) - \ln(x) -\ln(2\pi -x)) + \frac{4\alpha e^{-\alpha}}{K} \left( \abs{\frac{1}{x} -\frac{1}{\pi} } + \abs{\frac{1}{x-2\pi} + \frac{1}{\pi}} + \frac{1}{12}\abs{x-\pi} \right) \nonumber \\
	&+ \frac{1}{K^2} \left( \abs{\frac{10\alpha}{x^2} -\frac{10\alpha}{\pi^2} } + \abs{\frac{10\alpha}{(2\pi -x)^2} -\frac{10\alpha}{\pi^2} }  + \frac{6.5\alpha}{\pi^3} \abs{x-\pi}\right)
\end{align}

\end{proof}

\begin{remark}\label{remark:exp2}
\cite{GottbliebAndShu1991} suggests to choose $\alpha$ such that $e^{-\alpha}$ falls within the roundoff error of the specific computer. When $e^{-\alpha} \approx 0$, we observe that $\abs{\mathcal{K}_1(x)}$ is approximately an order of  $ O(K^{-2})$, which implies $|F_X^{\sigma}(x) - F_X(x)|$ is approximately an order of  $ O(K^{-2})$ as well.
\end{remark}

\begin{remark}\label{remark:exp2_parametrized}
Alternatively, we could parameterize $\alpha$, e.g.,  $\alpha = -\ln(1/K^2)$,  to ensure that $\mathcal{K}_1$ with the second order exponential filter attains the convergence rate of $O(K^{-2})$. 
\end{remark}

\section{Convergence of the implied moments}\label{sec:cos_moment}
The COS-recovered CDF also enables us to efficiently calculate the corresponding moments with a semi-analytical series expansion. E.g., the COS-implied first moment is given by
\begin{equation}
\int_a^b x d F_X^{\sigma}(x) = \frac{A_0}{4}(b^2 - a^2) + A_k \sigma(k/K) \left(\frac{ b - a }{k\pi} \right)^2 \sum_{k=1}^{K} (\cos{k\pi} - 1)
\end{equation}

And the COS-implied second moment is given by
\begin{equation}
\int_a^b x^2 d F_X^{\sigma}(x) = \frac{A_0}{6}(b^3 - a^3) + 2 A_k \sigma(k/K) \left(\frac{ b - a }{k\pi} \right)^2 \sum_{k=1}^{K} (b\cos{k\pi} - a)
\end{equation}

In the general case, 

\begin{align}
	\int_a^b x^q d F_X^{\sigma}(x) &= \int_a^b x^q  \left(\frac{A_0}{2}+ \sum_{k=1}^{K} A_k \sigma(k/K) \cos\left(k\pi \frac{x-a}{b-a}\right) \right) d x \nonumber \\
	&= \frac{A_0}{2(q+1)}(b^{q+1} - a^{q+1}) + \sum_{k=1}^K \sigma(k/K)A_k C_k,
\end{align}
where $C_k$ is the Fourier cosine transform of $x^q$, i.e., $C_k =\int_a^b x^q  \cos\left(k\pi \frac{x-a}{b-a}\right)  $.

In the following theorem, we demonstrate the convergence of the COS implied moments and analyze the convergence rate.

\begin{theorem}\label{thm_moment}
	Consider a discrete random variable $X$ with finite number of possible values $\{\mathcal{X}_{0\leq m \leq M}\}$. Without loss of generality, we assume $0 < \mathcal{X}_0 < \mathcal{X}_1 < \cdots < \mathcal{X}_M < \pi$. Let $q$ be a positive integer equal to or less than $p$. The COS implied moment, i.e., $\int_0^{\pi} x^q dF_{X}^{\sigma}$,  converges to the actual moment   $\int_0^{\pi} x^q dF_{X}$ for any positive integer number $q$. The convergence error can be bounded as follows:
	\begin{equation}
	\abs{\int_0^{\pi} x^q dF_{X}^{\sigma}- \int_0^{\pi} x^q d dF_{X} } \leq  C_1 O(K^{1-p}) +  C_2 O(K^{1/2-q})
	\end{equation}
	for some positive constants $C_1$ and $C_2$. 
\end{theorem}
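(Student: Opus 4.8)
The plan is to decompose the moment error using integration by parts (Abel summation at the level of the distribution functions) so that we can reuse the pointwise CDF error estimate from Theorem~\ref{thm_cos}. Write $\int_0^\pi x^q\,dF_X^\sigma - \int_0^\pi x^q\,dF_X = \int_0^\pi x^q\,d(F_X^\sigma - F_X)$. Since $F_X$ is a step function and $F_X^\sigma$ is smooth, integrate by parts: $\int_0^\pi x^q\,d(F_X^\sigma - F_X) = \left[x^q(F_X^\sigma - F_X)\right]_0^\pi - q\int_0^\pi x^{q-1}(F_X^\sigma(x) - F_X(x))\,dx$. The boundary term vanishes at $0$ (factor $x^q$) and, at $\pi$, $F_X^\sigma(\pi) - F_X(\pi)$ is itself $O(K^{1-p})$ by a limiting version of Theorem~\ref{thm_cos} (or can be absorbed after the rescaling trick of Remark~\ref{remark:extension_of_interval}). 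So the error is controlled by $q\int_0^\pi x^{q-1}\abs{F_X^\sigma(x) - F_X(x)}\,dx$ plus an $O(K^{1-p})$ boundary contribution, which already explains the $C_1 O(K^{1-p})$ term once we show the integral is not worse.

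The second step is to bound $\int_0^\pi x^{q-1}\abs{F_X^\sigma(x) - F_X(x)}\,dx$ using the exact error formula \eqref{eq:error_filtered_cos_cdf}, i.e.\ $F_X^\sigma(x) - F_X(x) = \sum_m \frac{p_m}{2\pi}\big(\text{sums of }\mathcal{K}_1\text{ evaluated at }x\pm\mathcal{X}_m,\ x\pm\mathcal{X}_m+2\pi\big)$. The subtlety is that the bounds \eqref{bound:K_l_less_than_p}–\eqref{bound:K_p} on $\mathcal{K}_1$ blow up like $x^{-p}$ (or $\abs{\ln x}$ when $p=1$) near the endpoints $0$ and $2\pi$ of the period, i.e.\ near $x = \mathcal{X}_m$ from one side and near $x$ such that $x - \mathcal{X}_m + 2\pi \to 2\pi$. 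Away from those singular points $\mathcal{K}_1$ is uniformly $O(K^{1-p})$ and integrates against $x^{q-1}$ to give $O(K^{1-p})$. Near a singular point we instead use the representation $\mathcal{K}_1(x) = \int_\pi^x \mathcal{K}_0(t)\,dt$ together with the $\mathrm{L}^2$-type bound on $\mathcal{K}_0$; more directly, one uses that $\norm{\mathcal{K}_1}_{\mathrm{L}^1(0,2\pi)} = O(K^{1-p})$ (integrate \eqref{bound:K_p}: $\int_0^{2\pi}\abs{\ln x}\,dx < \infty$), so that $\int x^{q-1}\abs{\mathcal{K}_1(\cdot)}$, being a bounded weight times an $\mathrm{L}^1$ function of size $O(K^{1-p})$, is again $O(K^{1-p})$. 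This yields the $C_1 O(K^{1-p})$ contribution.

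The $C_2 O(K^{1/2-q})$ term must come from a different mechanism, namely the discretization/truncation of the Fourier cosine transform $C_k$ of $x^q$ on $[0,\pi]$: when $q \le p$ the filter does not fully suppress the algebraic decay of the cosine coefficients of $x^q$ (whose $k$-th coefficient decays only like $k^{-2}$ in general, but whose relevant partial-sum remainder, measured in the weighted/Parseval sense that enters here, contributes at the $K^{1/2-q}$ scale through the mismatch between the order-$p$ filter and the order-$q$ smoothness of the monomial extended periodically). Concretely, I would split $\int_0^\pi x^q\,dF_X^\sigma = \frac{A_0}{2(q+1)}(\pi^{q+1}) + \sum_{k=1}^K \sigma(k/K)A_k C_k$ and compare with the full (unfiltered, untruncated) series $\sum_{k=1}^\infty A_k C_k$; the tail $\sum_{k>K}$ and the filter-defect $\sum_{k\le K}(1-\sigma(k/K))A_k C_k$ are estimated by Cauchy–Schwarz, using $\abs{A_k} \le \frac{2}{\pi}$ (bounded, since $A_k$ is a finite cosine sum of the $p_m$) and $\big(\sum_{k>K} C_k^2\big)^{1/2} = O(K^{1/2-q})$ from the $C_k = O(k^{-q-1})$ decay of a $C^{q-1}$-but-not-$C^q$ function's cosine coefficients, with the factor-$K^{1/2}$ coming from the number of terms in the filter-transition band $k \in (\epsilon K, K)$. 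The main obstacle, and the step I would spend the most care on, is this last estimate: pinning down exactly why the combination of the order-$p$ filter and the order-$q$ monomial produces the half-integer exponent $K^{1/2-q}$ rather than $K^{-q}$ or $K^{1-q}$, and verifying it is uniform in the filter within Definition~\ref{def:spectral_filter}. The rest is bookkeeping with the $\mathrm{L}^1$ bounds on $\mathcal{K}_1$ already available from Section~\ref{sec:cos}.
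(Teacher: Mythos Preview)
Your first step---one integration by parts to reduce the moment error to the boundary term $\pi^q\bigl(F_X^\sigma(\pi)-F_X(\pi)\bigr)$ plus $q\int_0^\pi x^{q-1}\bigl(F_X^\sigma(x)-F_X(x)\bigr)\,dx$---is exactly what the paper does. After that, however, there is a genuine gap.

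You try to bound the integral in one shot by asserting $\lVert\mathcal{K}_1\rVert_{L^1(0,2\pi)}=O(K^{1-p})$, citing the logarithmic bound~\eqref{bound:K_p}. But~\eqref{bound:K_p} is a bound on $\mathcal{K}_p$, not on $\mathcal{K}_1$; the only pointwise bound you have on $\mathcal{K}_1$ is~\eqref{bound:K_l_less_than_p}, whose right-hand side $K^{1-p}\bigl(x^{-p}+(2\pi-x)^{-p}\bigr)$ is not integrable over $(0,2\pi)$ for $p\ge 1$. In fact, from Vandeven's Proposition~4 one only gets $\lVert\mathcal{K}_1\rVert_{L^2}=O(K^{-1/2})$, hence $\lVert\mathcal{K}_1\rVert_{L^1}=O(K^{-1/2})$ at best, independently of $p$. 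So a single integration by parts followed by an $L^1$ estimate on $\mathcal{K}_1$ cannot produce the $O(K^{1-p})$ term, and your separate, speculative explanation of the $O(K^{1/2-q})$ term via the tail and filter defect of $\sum A_kC_k$ is both unnecessary and unsubstantiated.

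The paper's mechanism is different and quite clean: after the first integration by parts, it plugs in the exact error formula~\eqref{eq:error_filtered_cos_cdf} and then integrates by parts $q-1$ \emph{more} times in each integral of the form $\int x^{q-1}\mathcal{K}_1(x\pm\mathcal{X}_m)\,dx$, using that $\mathcal{K}_{l+1}$ is a primitive of $\mathcal{K}_l$. This produces boundary terms $\pi^{q+1-j}\mathcal{K}_j(\pi\pm\mathcal{X}_m)$ for $2\le j\le q$, each of which is $O(K^{1-p})$ by~\eqref{bound:K_l_less_than_p} since $j\le q\le p$ and the arguments lie strictly inside $(0,2\pi)$; this is the source of the $C_1O(K^{1-p})$ term. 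What remains is $q!\int_0^\pi\mathcal{K}_q(x\pm\mathcal{X}_m)\,dx$, which the paper bounds by Cauchy--Schwarz and the estimate $\int_0^{2\pi}\mathcal{K}_q^2\,dx=O(K^{1-2q})$ from Vandeven's Proposition~4, giving exactly the $C_2O(K^{1/2-q})$ contribution. The half-integer exponent thus comes from the $L^2$ norm of $\mathcal{K}_q$, not from any Fourier-coefficient tail analysis of $x^q$.
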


\begin{proof}
	Integration by parts yields
	\begin{align*}
	\int_0^{\pi} x^q dF_{X}^{\sigma}- \int_0^{\pi} x^q d dF_{X}   = {\pi}^q ( F_X^{\sigma}(\pi) -  F_X(\pi) ) - q \big( \int_0^{\pi}x^{q-1} (F_{X}^{\sigma}(x) - F_{X}(x)) dx \big) 
	\end{align*}

Note that
\begin{align*}
&q\int_0^{\pi}x^{q-1} (F_{X}^{\sigma}(x) - F_{X}(x)) dx = q\int_0^{\pi} x^{q-1} \big(   \sum_{m=1}^{M-1} \frac{ p_m}{2\pi} \left( \mathcal{K}_1(x+\mathcal{X}_m) +  \mathcal{K}_1(x- \mathcal{X}_m + 2\pi) \right) \mathbf{1}_{0 < x < \mathcal{X}_m} \nonumber \\
&+ \sum_{m=1}^{M-1}  \frac{p_m }{2\pi} \left( \mathcal{K}_1(x-\mathcal{X}_m) +  \mathcal{K}_1(x + \mathcal{X}_m) \right) \mathbf{1}_{\mathcal{X}_m < x < \pi} \big)dx \\
&= \sum_{m=1}^{M-1} \frac{ p_m}{2\pi} q\int_0^{\pi} x^{q-1}\mathcal{K}_1(x+\mathcal{X}_m) \mathbf{1}_{0 < x < \mathcal{X}_m}dx + \sum_{m=1}^{M-1} \frac{ p_m}{2\pi} q\int_0^{\pi} x^{q-1}\mathcal{K}_1(x - \mathcal{X}_m + 2\pi) \mathbf{1}_{0 < x < \mathcal{X}_m}dx \\
&+ \sum_{m=1}^{M-1} \frac{ p_m}{2\pi} q\int_0^{\pi} x^{q-1}\mathcal{K}_1(x - \mathcal{X}_m) \mathbf{1}_{\mathcal{X}_m < x < \pi}dx + \sum_{m=1}^{M-1} \frac{ p_m}{2\pi} q\int_0^{\pi} x^{q-1}\mathcal{K}_1(x + \mathcal{X}_m) \mathbf{1}_{\mathcal{X}_m < x <\pi}dx
\end{align*}

Repeating integration by parts allows to estimate a bound for each of the terms above. E.g.,  $\abs{q\int_0^{\pi} x^{q-1} K_1(x + \mathcal{X}_m) \mathbf{1}_{\mathcal{X}_m < x < \pi} dx} < \abs{q\int_0^{\pi} x^{q-1} K_1(x + \mathcal{X}_m) dx}$, and,
\begin{align*}
q\int_0^{\pi} x^{q-1} K_1(x + \mathcal{X}_m) dx &= \sum_{j=2}^q (-1)^j \prod_{l=1}^{j-1} {(q+1-l)} \cdot \pi^{q+1-j} K_j(\pi + \mathcal{X}_m) + (-1)^{q+1} q! \int_0^{\pi} K_q(x+\mathcal{X}_m)dx.
\end{align*}

On the one hand,  $F_X^{\sigma}(\pi) -  F_X(\pi)$ and $K^q(\cdot)$ converge with the rate of $O(K^{1-p})$.

On the other hand, the Cauchy-Schwarz inequality and the periodicity of $K_q$ implies that
\[
\abs{\int_0^{\pi} K_q(x+\mathcal{X}_m)dx} \leq \sqrt{\pi} \sqrt{\int_0^{\pi} K_q^2(x+\mathcal{X}_m)dx }
\]

Note that $\int_0^{2\pi} K_q^2(x)dx \sim O(K^{1-2q})$ (see e.g., Proposition 4 in \cite{Vandeven1991}), we obtain
\[
\int_0^{\pi} K_q^2(x+\mathcal{X}_m)dx \leq \int_0^{2\pi} K_q^2(x)dx \sim O(K^{1-2q})
\]

Therefore in total, 
\[
\abs{\int_0^{\pi} x^q dF_{X}^{\sigma}- \int_0^{\pi} x^q d dF_{X} } \leq  C_1 O(K^{1-p}) +  C_2 O(K^{1/2-q})
\]
for some positive constants $C_1$ and $C_2$. 
\end{proof}

\section{Numerical examples and applications}\label{sec:numerical_example}
We first perform numerical tests to verify the validity of the derived bounds for $\mathcal{K}_1$. Next, we explore two applications of the discrete COS method: solving the distributions of the generalized Poisson-binomial distributions and the affine Hawkes processes.

\subsection{Numerical Tests of the Bounds for $\mathrm{K1}$}
The bounds of $\mathcal{K}_1$ in Inequality \ref{ineq:K1_bound_lanczos}, \ref{ineq:K1_bound_raised_cosine}, \ref{ineq:K1_bound_sharpened_raised_cosine}, and \ref{ineq:K1_bound_exp2} are numerically verified over a grid of 1,000 evenly spaced points between $0$ and $2\pi$, excluding the endpoints $0$ and $2\pi$. The testing codes are available at the Github repository.

To illustrate the convergence behavior of $K1$, we plot the values of $\mathcal{K}_1(0.5)$ w.r.p an increasing number of Fourier expansion terms. See Figure \ref{fig:bound_lanczos}, \ref{fig:bound_raised_cosine}, \ref{fig:bound_sharpened_raised_cosine}, and  \ref{fig:bound_exp2}, where the bounds are calculated using our formulas \ref{ineq:K1_bound_lanczos}, \ref{ineq:K1_bound_raised_cosine}, \ref{ineq:K1_bound_sharpened_raised_cosine}, and \ref{ineq:K1_bound_exp2}. For the second-order exponential filter, we set $\alpha=16$ to suppress the contributions of the terms on the right hand side of \ref{ineq:K1_bound_exp2} that involve the factor $e^{-\alpha}$.

Alternatively, we parameterize $\alpha = -\ln(1/K^2)$ to ensure that $\mathcal{K}_1$ with the second order exponential filter achieves the convergence rate of $O(K^{-2})$. See the convergence behavior in Figure \ref{fig:bound_exp2_parametrized}.

%
%
%

\begin{table}[h!]
	\centering
	\begin{tabular}{cc}
		\begin{minipage}{0.45\textwidth}
			\centering
			\includegraphics[width=\linewidth]{./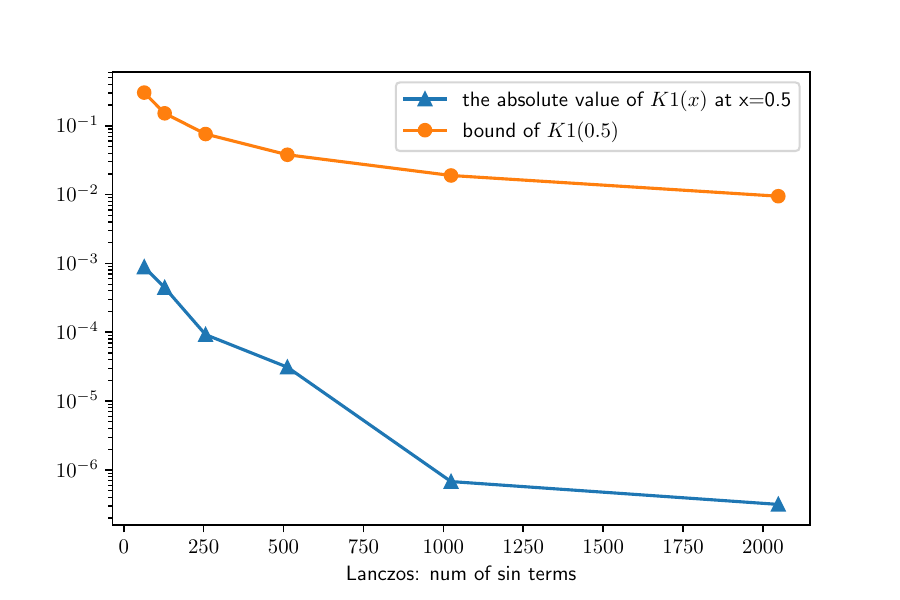}
			\captionof{figure}{\small Lanczos}\label{fig:bound_lanczos}
		\end{minipage} &
		\begin{minipage}{0.45\textwidth}
			\centering
			\includegraphics[width=\linewidth]{./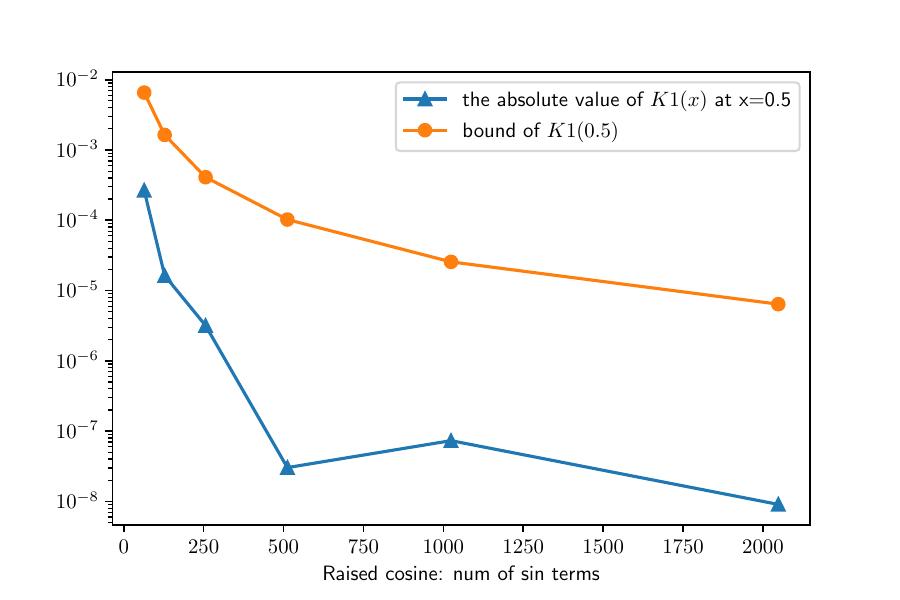}
			\captionof{figure}{\small Raised cosine}\label{fig:bound_raised_cosine}
		\end{minipage} \\
		
		\begin{minipage}{0.45\textwidth}
			\centering
			\includegraphics[width=\linewidth]{./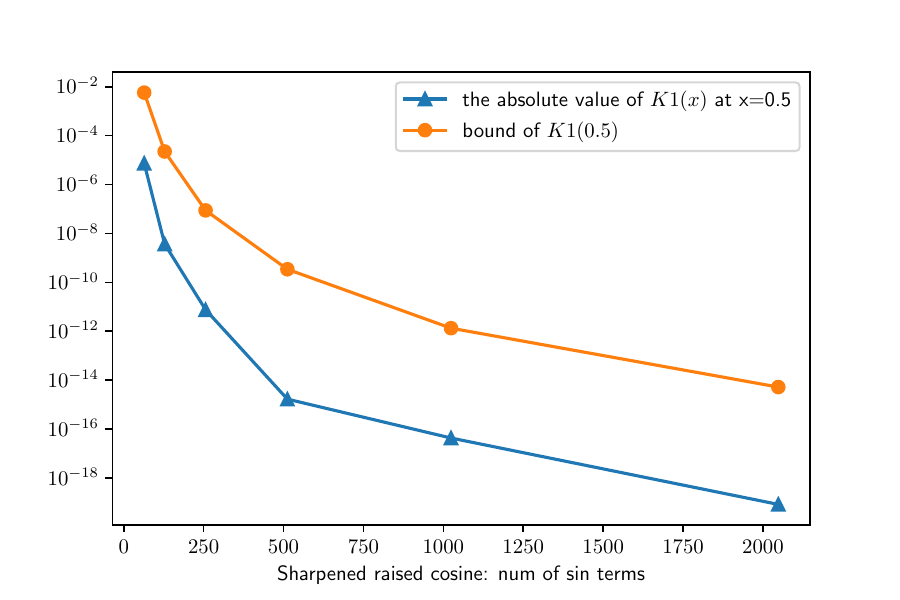}
			\captionof{figure}{\small Sharpened raised cosine}\label{fig:bound_sharpened_raised_cosine}
		\end{minipage} &
		\begin{minipage}{0.45\textwidth}
			\centering
			\includegraphics[width=\linewidth]{./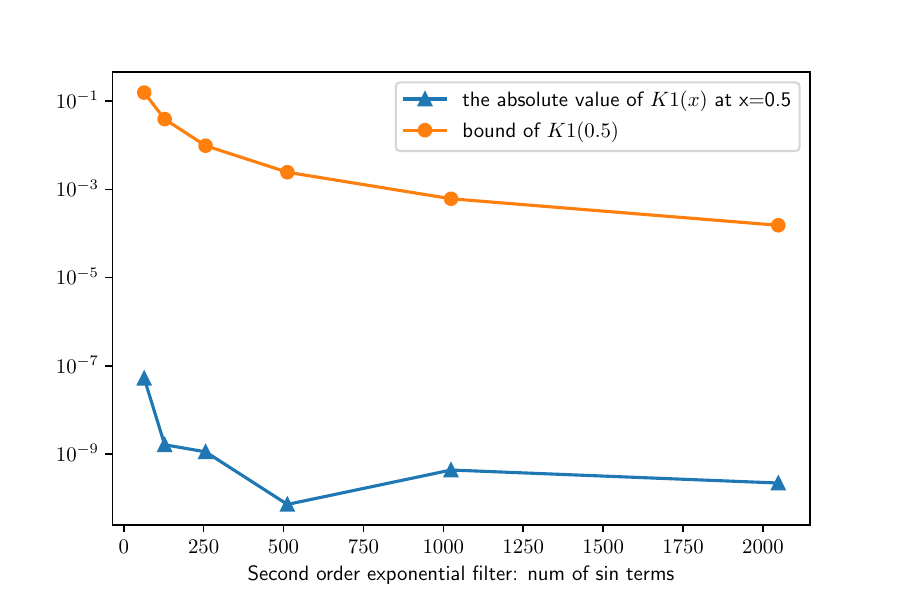}
			\captionof{figure}{\small Second order exponential}\label{fig:bound_exp2}
		\end{minipage}
	\end{tabular}
\end{table}

\begin{figure}[!htbp]
	\begin{center}
		\includegraphics[scale=0.66]{./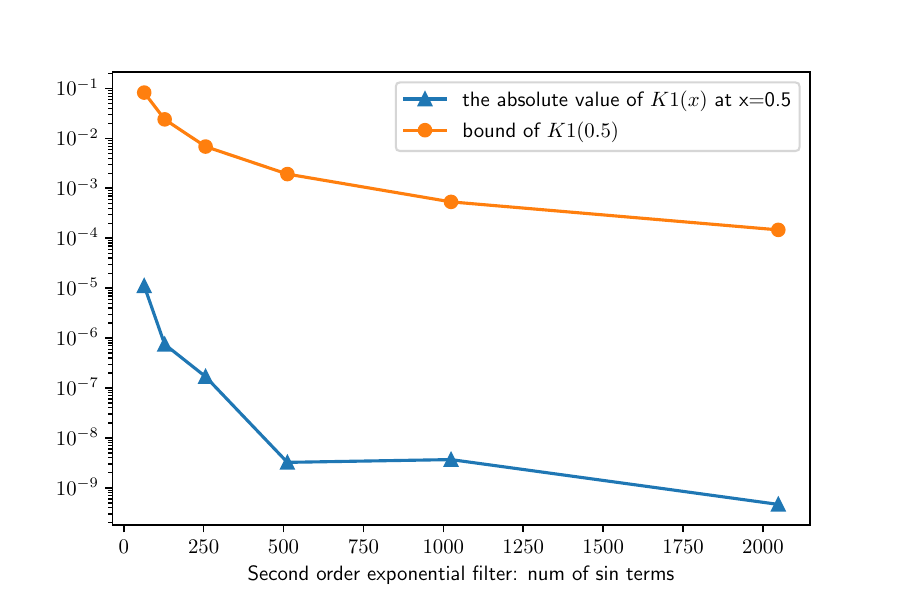}
		\caption{Bound of $\mathrm{K}_1$ with the second order exponential filter. $\alpha =-\ln(1/K^2) $}\label{fig:bound_exp2_parametrized}
	\end{center}
\end{figure}

\subsection{Generalized Poisson-binomial distribution}
The first application we consider involves solving the CDF of PB and GPB distributions. The PB distribution is the distribution of the sum of independent, non-identically distributed Bernoulli random variables. The GPB distribution extends this by replacing the Bernoulli variables in the PB distribution with two-point random variables that can take arbitrary values instead of just 0 and 1. A GPB random variable $X$ can be represented as

\begin{equation}\label{eq:GPB}
	X = \sum_{n=1}^N (a_n (1 - I_n) + b_n I_n),
\end{equation} 
where $\{I_{n}\}_{1\leq n \leq N}$ is a sequence of independent and non-identically distributed Bernoulli random variables with $\mathrm{P}(I_n=1) = p_n$. If $a_n=0$ and $b_n=1$ for all $n$, $X$ reduces to a PB random variable.

 The PB and GPB distributions have broad applications in fields such as statistics, actuarial science, and voting theory. While the computation of their CDFs has been extensively studied, existing methods become computationally expensive for large values of $N$. Recursive methods, such as those in \cite{ChenAndLiu1997}, exhibit a worst-case computational complexity of $O(N^2)$
, whereas FFT-based approaches (e.g., \cite{BZB2018}, \cite{FernandezAndWilliams2010}, \cite{Hong2013}, \cite{ZHB2018}) still has a complexity of $O(N\ln{N})$.

In contrast, the COS method offers a semi-analytical formula for easily approximating the CDF. Note that the ch.f. of $X$ is given by
\[
\varphi_X(\omega) = \prod_{n=1}^N \left((1-p_n)e^{i\omega a_n} + p_n e^{i\omega b_n}  \right),
\]
which can be inverted in a single-step calculation (Equation \ref{eq:filtered_cos_cdf} ) to yield the CDF.

We first apply the semi-analytical COS formula to a two-point distribution, which is a special case of the PB and GPB distributions. Here,  $X$ takes values on $\pi/4$ and $\pi/2$ with $\mathrm{P}(X=\pi/4) = 0.4$. Table \ref{tab:cos_bernoulli_error}  presents the approximation error of COS when using the raised cosine filter to evaluate the CDF at $0.6\pi$. The convergence behavior is plotted in Figure \ref{fig:TwoPoint}.

\begin{table}[H]
	\centering
	\begin{tabular}{|c|c|c|c|c|c|}
		\hline
		Number of Fourier terms  & 16 & 32 & 64 & 128 & 256\\
		\hline
		Absolute error  & 3.3e-3 & 7.8e-4 & 4.7e-5 & 8.6e-6 & 3.7e-7      \\
		\hline
	\end{tabular}
	\caption{Error convergence of the COS CDF of a two-point distribution at $0.6\pi$ with the raised cosine filter. }\label{tab:cos_bernoulli_error}
\end{table}

\begin{figure}[!htbp]
	\begin{center}
		\includegraphics[width=1.0\textwidth]{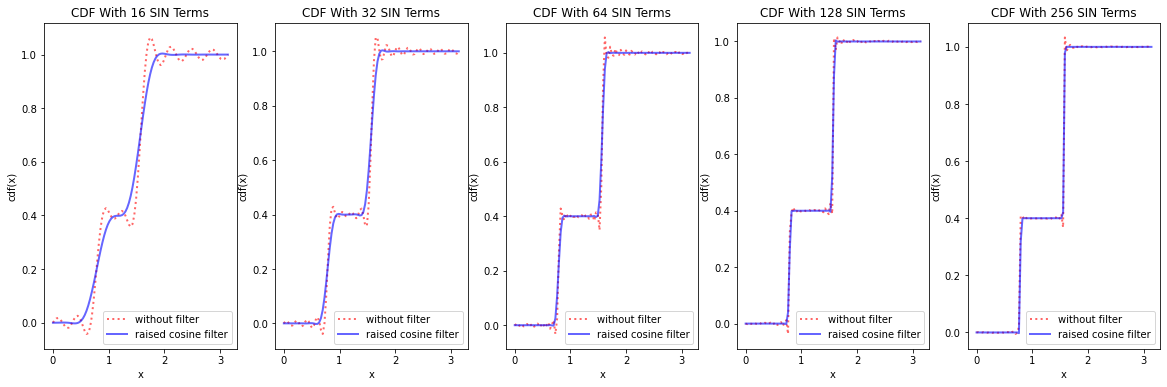}
		\caption{COS CDF of a two-point distribution with the raised cosine filter.}\label{fig:TwoPoint}
	\end{center}
\end{figure}

Next, we consider a GPB random variable $X$ by setting $N=95$ in Equation \ref{eq:GPB}, with
\begin{itemize}
	\item The sequence of Bernoulli random variable $I_n$ has probabilities $p_n$ of $1\%, 2\%, 3\%, \cdots$ up to $95\%$ of taking the value 1;
	\item Each $b_n$ is independently sampled from the uniform distribution on $[0, 1]$;
	\item Each $a_n$ is set to be half of $b_n$.
\end{itemize}

Figure \ref{fig:GPB} shows the COS-recovered CDF of $X$ using the raised cosine filter and 128 Fourier expansion terms, and the CDF estimated with 1 million Monte Carlo simulation. Note that the two CDFs match well with each other, while the COS CDF can cover the entire support of $X$. The computational time of the COS method is about 1 second on an Intel(R) Core(TM) i5-10210U CPU with 4 cores and base frequency of 1.60 GHz.

\begin{figure}[!htbp]
	\begin{center}
		\includegraphics[width=1.0\textwidth]{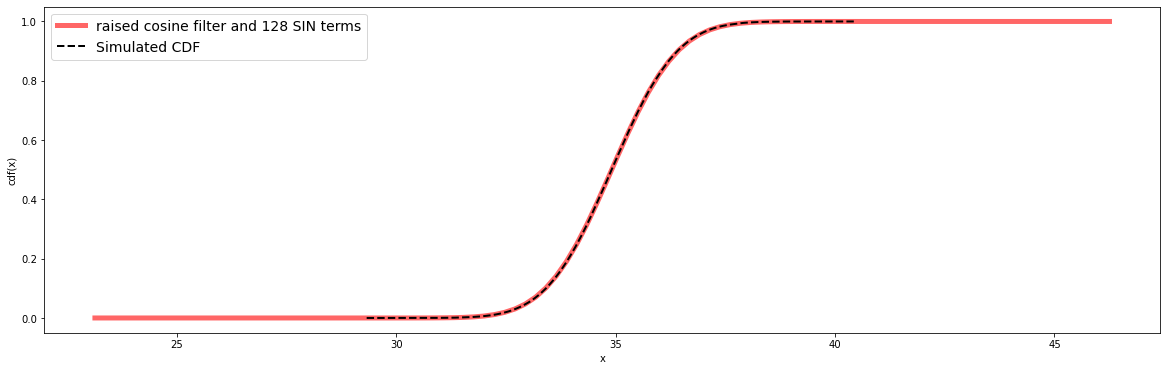}
		\caption{Comparison between the COS CDF and Monte Carlo simulated CDF}\label{fig:GPB}
	\end{center}
\end{figure}

\subsection{Affine Hawkes processes}
Another application we explore is the affine Hawkes process proposed in \cite{Errais2020} for modeling portfolio credit risk. Let $T_{n, n\geq 1}$ be an increasing sequence of stopping times   defined on the probability space $(\Omega, \mathcal{F}, P)$ with right continuous and complete information filtration $\mathbb{F} = (\mathcal{F}_t)_{t\geq 0}$. In this framework, the defaults in a credit-risky portfolio are characterized by the sequence of random variables $(T_n, l_n)$, where $T_n$  represents the time of the $n$th default, and $l_n$ denotes the corresponding random loss.  The number of defaults by time $t$, $N_t$, is modeled as a point process: $N_t = \sum_{n \geq 1} \boldmath{1}_{(T_n \leq t)}$. The associated loss process is given by $L_t = \sum_{n \geq 1} l_n \boldmath{1}_{(T_n \leq t)}$. \\

Further, $N_t$ is assumed to follow a Hawkes process, with the intensity process $\lambda_t$  satisfying the stochastic differential equation:
\[
d\lambda_t = \kappa(c -\lambda_t)dt + \delta d L_t.
\] 

The conditional Fourier transform of $J_t=\left(L_t,N_t\right)$ is provided in \cite{Errais2020}. 

\begin{prop}\label{prop:Hawkes}
	Let $u\in \mathbb{C}^2_{-}$ denote a vector of two complex numbers with non-positive real part, the Fourier transform of $J_T$ conditional on $\mathcal{F}_{t}$ ($t\leq T$) is given by
	\begin{equation}
		\mathrm{E}\left(e^{u\cdot J_T}|\mathcal{F}_t\right)=\exp\left(a(t)+ b(t) \lambda_t + u\cdot J_t\right),
	\end{equation}
	where $a$ and $b$ satisfy the following ordinary differential equations (ODEs):
	\begin{align*}
		\partial_t b(t) &=\kappa b(t) + 1 - \theta\left(\delta b(t) + u\cdot \left(1,0\right)^\intercal \right)e^{u\cdot \left(0,1\right)^\intercal} , \\
		\partial_ta\left(t\right) &=-\kappa c b\left(t\right) ,
	\end{align*}
with the boundary conditions $ b\left(T\right)=0$ and $a\left(T\right)=0$, and $\theta$ denoting the Fourier transform of the loss distribution $\mu$:
	\[
	\theta(\omega)=\int e^{wz}dv(z).
	\]
\end{prop}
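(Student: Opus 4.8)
The plan is to exploit the affine structure of the Markov process $(\lambda_t,L_t,N_t)$ and to verify the formula by a martingale argument, in the spirit of the affine transform theory underlying \cite{Errais2020}. Fix $u=(u_1,u_2)\in\mathbb{C}^2_-$ and a horizon $T$, let $a,b$ solve the stated ODEs with $a(T)=b(T)=0$, and set
\[
M_t:=\exp\!\left(a(t)+b(t)\lambda_t+u\cdot J_t\right),\qquad 0\le t\le T.
\]
Since $a(T)=b(T)=0$ we have $M_T=e^{u\cdot J_T}$, so once $M$ is shown to be a martingale the tower property gives $\mathrm{E}(e^{u\cdot J_T}\mid\mathcal{F}_t)=M_t$, which is exactly the claim. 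The three ingredients to establish are: the generator identity that makes $M$ a local martingale, the integrability needed to upgrade this to a true martingale, and the well-posedness of the ODE system.

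For the first ingredient I would write down the infinitesimal generator of $(\lambda_t,L_t,N_t)$. Between defaults $\lambda$ follows the deterministic flow $d\lambda=\kappa(c-\lambda)\,dt$; a default arrives with stochastic intensity $\lambda_t$, at which instant a loss $z\sim\mu$ produces the simultaneous jumps $N\mapsto N+1$, $L\mapsto L+z$, $\lambda\mapsto\lambda+\delta z$. Hence for smooth $f$,
\[
\mathcal{A} f(\lambda,\ell,n)=\kappa(c-\lambda)\,\partial_\lambda f+\lambda\int\!\big(f(\lambda+\delta z,\ell+z,n+1)-f(\lambda,\ell,n)\big)\,\mu(dz).
\]
Applying $\partial_t+\mathcal{A}$ to $f_t(\lambda,\ell,n)=\exp(a(t)+b(t)\lambda+u_1\ell+u_2 n)$ and dividing out $f_t$ gives
\[
a'(t)+b'(t)\lambda+\kappa(c-\lambda)b(t)+\lambda\big(e^{u_2}\theta(\delta b(t)+u_1)-1\big)=0.
\]
Matching the coefficient of $\lambda$ reproduces the Riccati equation for $b$ (note $u\cdot(1,0)^\intercal=u_1$ and $u\cdot(0,1)^\intercal=u_2$), and the $\lambda$-free part gives $a'=-\kappa c\,b$. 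With these ODEs $\partial_t M_t+\mathcal{A} M_t\equiv 0$, so It\^o's formula shows $M$ is a local martingale.

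For the second ingredient the hypothesis $u\in\mathbb{C}^2_-$ is essential: since $L_t$ and $N_t$ are non-decreasing and $\mathrm{Re}(u)\le 0$, one has $|e^{u\cdot J_t}|\le 1$; moreover, propagating $b(T)=0$ backward and using that $|\theta(\omega)|\le 1$ whenever $\mathrm{Re}(\omega)\le 0$ (because $\mu$ is supported on $[0,\infty)$), one checks that $\mathrm{Re}(b(t))\le 0$ on $[0,T]$, whence $\mathrm{Re}(a(t))$ also stays bounded. Combined with the finiteness of exponential moments of $\lambda_t$ over the compact interval $[0,T]$, this produces a uniform $L^1$ bound on $M_t$ and thus the true martingale property. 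Well-posedness of the ODE system then follows: the right-hand side of the $b$-equation is locally Lipschitz (analyticity of $\theta$ on the left half-plane), the a priori bound $\mathrm{Re}(b)\le 0$ rules out explosion, and $a$ is recovered by direct integration. The \emph{main obstacle} is precisely this integrability/non-explosion step --- showing that the backward Riccati flow for $b$ never leaves the region where $\theta(\delta b+u_1)$ is defined and bounded, so that $M$ does not lose mass; the generator computation and the coefficient matching are otherwise routine.
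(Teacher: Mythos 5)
There is nothing in the paper to compare against here: Proposition \ref{prop:Hawkes} is imported verbatim from \cite{Errais2020} and stated without proof, so your proposal is not an alternative to the paper's argument but a reconstruction of the one behind the citation. That reconstruction is correct and is the standard affine-transform/martingale verification: the generator you write down is the right one for the marked point process with intensity $\lambda_t$, i.i.d.\ marks $z\sim\mu$ and simultaneous jumps $(\lambda,L,N)\mapsto(\lambda+\delta z,L+z,N+1)$, and applying $\partial_t+\mathcal{A}$ to $\exp(a(t)+b(t)\lambda+u_1\ell+u_2 n)$ and matching the $\lambda$-coefficient and the $\lambda$-free part reproduces exactly the two ODEs, with the terminal conditions giving $M_T=e^{u\cdot J_T}$. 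The only step I would tighten is the upgrade from local to true martingale: a uniform $L^1$ bound alone does not imply the martingale property, and the appeal to exponential moments of $\lambda_t$ is unnecessary. What your own estimates actually give is stronger: $\mathrm{Re}(u)\le 0$ and $J_t$ nondecreasing give $|e^{u\cdot J_t}|\le 1$; the backward comparison argument with $|\theta(\omega)e^{u_2}|\le 1$ on $\{\mathrm{Re}(\omega)\le 0\}$ (valid because the losses are nonnegative) gives $\mathrm{Re}\,b\le 0$ on $[0,T]$, hence $\mathrm{Re}\,a(t)=\kappa c\int_t^T \mathrm{Re}\,b(s)\,ds\le 0$, and since $\lambda_t\ge 0$ this yields the deterministic bound $|M_t|\le 1$; a bounded local martingale is a true martingale. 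The same invariance $\mathrm{Re}\,b\le 0$ keeps the right-hand side of the $b$-equation bounded by $\kappa|b|+2$, so the backward Riccati flow cannot explode and the "main obstacle" you identify is indeed resolved this way. With that rewording your proof is complete and is essentially the Duffie--Pan--Singleton style argument on which \cite{Errais2020} relies.
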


There is no closed form expression for the distribution of the number of defaults in a given time interval, i.e.,  $N_T - N_t$ conditional on $\mathcal{F}_t$. Only the first order moment is found in \cite{Errais2020} .  \cite{Errais2020} relies on the Fa\`a di Bruno's formula to evaluate $\mathrm{P}(N_T - N_t| \mathcal{F}_t)$, which is quite cumbersome. However, using the discrete COS method, we can easily get a semi-analytical expression of the CDF of  $N_T$ conditional on $\mathcal{F}_t$.

As a numerical example, we set $t=1$,  $T=2$, $\lambda_t=1$, and $J_t=\left(0.7, 3\right)$, with parameters $\kappa=1.2$, $\delta=0.7$, and $c=1$. The loss at default $l_n$ is an i.i.d. sequence of exponential random variables with the rate parameter $\lambda = 5/6$. Since $N_T$ takes values on $[N_t, +\infty)$, we need define a large enough truncation range, the same as the COS method for the continuous distributions (see \cite{FangAndOosterleeCOS}). The truncation range of $N_T$ is determined using the rule $[N_t, \mathrm{E}_t[N_T] + 25 \cdot \sigma_t(N_T)] $, where $\mathrm{E}_t[N_T]$ and $\sigma_t(N_T)$ are the conditional expectation and standard derivation, approximated by the numerical derivatives of the conditional ch.f of $N_T$. To reduce the impact of Gibbs phenomena on the left side of the truncation range, the lower bound is further shifted downward to $N_t - 0.1 u$, where $u$ refers to the upper bound $\mathrm{E}_t[N_T] + 25 \cdot \sigma_t(N_T)$. Figure \ref{fig:Hawkes_CDF} and \ref{fig:Hawkes_PMF} show the CDF and PMF of $N_T - N_t$ conditional on $\mathcal{F}_t$, using 128 Fourier expansion terms for $L$ and $1024$ terms for $N$ and the sharpened raised cosine filter. A large number of Fourier terms is deliberately chosen to visualize the steps in the CDF.
 
\begin{figure}[H]
	\centering
	\includegraphics[scale=0.66]{./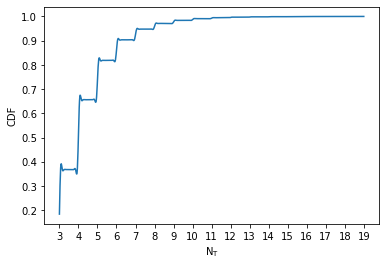}
	\caption{The COS-recovered conditional CDF of $N_T$} 
	\label{fig:Hawkes_CDF}
\end{figure}

\begin{figure}[H]
	\centering
	\includegraphics[scale=0.66]{./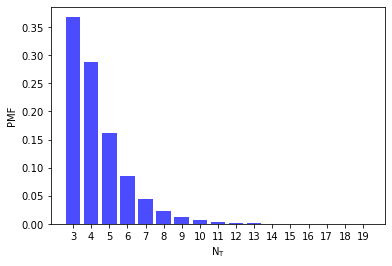}
	\caption{The COS-recovered conditional PMF of $N_T$} 
	\label{fig:Hawkes_PMF}
\end{figure}

The total computational time is about 2.5 seconds on an Intel(R) Core(TM) i5-10210U CPU, including the time spent solving the ODEs in Proposition \ref{prop:Hawkes}.

To benchmark the accuracy of the discrete COS method, we compare the COS implied first moment of $N_T$ conditional on $\mathcal{F}_t$ against its analytical solution. An excellent match is observed. The residual error stops decreasing beyond 128 terms, which can attributed to the truncation range error and the numerical error in solving the ODEs.
\begin{table}[H]
	\centering
	\begin{tabular}{|c|c|c|c|c|c|c|}
		\hline
		Number of Fourier terms & 32 & 64 & 128 & 256 & 512 & 1024  \\
		\hline
		Absolute error & 2.8e-2 & 6.3e-4 & 6.5e-9 & 1.3e-9  & 1.3e-9 &  1.3e-9    \\
		\hline
	\end{tabular}
	\caption{Error convergence of $\mathrm{E}\left(N_T|\mathcal{F}_t\right)$ with respect to the number of expansion terms. }
\end{table}

\section{Conclusion}\label{sec:conclusion}
In this paper we provide the proof of convergence of the semi-analytical COS formula for discrete distributions and extend it to the bi-variate discrete distributions. We also analyze the error convergence rate, leading to improved convergence results for several classical spectral filters.

The established theoretical convergence, supported by numerical examples, highlights the potential of using the COS method to find efficient semi-analytical solutions to problems involving discrete probability distributions, such as approximating CDFs and computing moments.

\newpage
\bibliographystyle{siamplain}
\bibliography{references}

\end{document}